\documentclass[11pt]{amsart}
\usepackage{amssymb, latexsym, amsmath, amsfonts, tikz, tikz-cd}
\usepackage{hyperref, enumitem, fancyhdr}

\newtheorem{thm}{Theorem}[section]
\newtheorem{cor}[thm]{Corollary}
\newtheorem{lem}[thm]{Lemma}
\newtheorem{prop}[thm]{Proposition}
\theoremstyle{definition}
\newtheorem{defn}[thm]{Definition}
\theoremstyle{remark}

\numberwithin{equation}{section}
\theoremstyle{remark}
\newtheorem{exam}[thm]{Example}
\theoremstyle{theorem}
\newtheorem{res}{Result}[section]



\setlength{\oddsidemargin}{0in} \setlength{\evensidemargin}{0in}
\setlength{\textwidth}{6.3in} \setlength{\topmargin}{-0.2in}
\setlength{\textheight}{9in}



\newcommand{\mbb}{\mathbb}
\newcommand{\ra}{\rightarrow}

\newcommand{\sm}{\setminus}

\newcommand{\al}{\alpha}

\newcommand{\cal}{\mathcal}

\newcommand{\la}{\lambda}

\newcommand{\be}{\beta}

\begin{document}
\title{Riemann surface foliations with non-discrete singular set}
\keywords{Poincar\'{e} metric, singular holomorphic foliation, invariant analytic set}
\subjclass{Primary: 32S65, 32M25  ; Secondary : 30F45}
\author{Sahil Gehlawat}

\address{Tata Institute of Fundamental Research,
Centre for Applicable Mathematics,
Bengaluru 560065, India.} 
\email{sahil24@tifrbng.res.in}

\begin{abstract} 
 Let $\mathcal F$ be a singular Riemann surface foliation on a complex manifold $M$, such that the singular set $E \subset M$ is non-discrete. We study the behavior of the foliation near the singular set $E$, particularly focusing on singular points that admit invariant submanifolds (locally) passing through them. Our primary focus is on the singular points that are removable singularities for some proper subfoliation. We classify singular points based on the dimension of their invariant submanifold and, consequently, establish that for hyperbolic foliations $\mathcal{F}$, the presence of such singularities ensures the continuity of the leafwise Poincar\'{e} metric on $M \sm E$.

\end{abstract}  

\maketitle 

\section{Introduction}

Let $M$ be a complex manifold of dimension $N$, and $E \subset M$ a closed analytic subset of codimension at least 2. A Riemann surface foliation $\cal{F}$ on $M$, singular along $E$, is defined by an atlas of flow-boxes $(U_{\al}, \phi_{\al})$ of $M \sm E$ where $\phi_{\alpha} : U_{\alpha} \ra \mbb D \times \mbb D^{N-1}$ ($\mbb D \subset \mbb C$ is the open unit disc). The transition maps $\phi_{\al\be}$ are holomorphic in $(x, y) \in \mbb D \times \mbb D^{N-1}$ and take the form
\[
\phi_{\al \be}(x, y) = \phi_{\al} \circ \phi^{-1}_{\be}(x, y) =  (P(x, y), Q(y)).
\]
The leaves of $\mathcal F$ are Riemann surfaces that are locally expressed as $\phi^{-1}_{\al}(\{y = \mbox{constant} \})$. The leaf passing through $p \in M\sm E$ is denoted by $L_{p}$. 

\medskip
A subset $\Sigma \subset M$ is said to be $\mathcal{F}-$invariant if for every $p \in \Sigma \sm E$, the corresponding leaf $L_{p} \subset \Sigma$. A subset $\Sigma \subset M$ is said to be local $\mathcal{F}-$invariant near $p \in \Sigma \subset M$ if there exists a neighborhood $U \subset M$ of $p$ such that $\Sigma$ is invariant under $\mathcal{F}\vert_{U}$, that is, $\Sigma$ is $\mathcal{F}\vert_{U}-$invariant. Our focus is on singular points $p \in E$ for which there exists a submanifold $S \subset M$ satisfying the following properties: $S$ is local $\mathcal{F}$-invariant near $p$, $S \not\subset E$ and $p$ is a removable singularity of the foliation $\mathcal{F}\vert_{S}$. The saturated foliation $\hat{\mathcal{F}}\vert_{S}$ corresponding to the foliation $\mathcal{F}\vert_{S}$ is obtained by including all the removable singularities of the subfoliation $\mathcal{F}\vert_{S}$, and $\hat{E}_{S} \subset E \cap S$ will be the singular set of $\hat{\mathcal{F}}\vert_{S}$.

\medskip
In \cite{CS}, Camacho and Sad proved a remarkable result for holomorphic foliations $\mathcal{F}$ on a complex surface $M$. They showed that for every singular point $p \in E$, there exists a separatrix passing through $p$. A separatrix through $p \in E$ is a local analytic curve $\mathcal{C} \subset M$, such that $p \in \mathcal{C}$, and $\mathcal{C}$ is local $\mathcal{F}-$invariant near $p$. In the above framework, this result implies that for $N=2$, there always exists an invariant submanifold $S \subset M$ of dimension 1 containing $p$. However, this property is no longer true in general for higher dimensions $(N > 2)$. As demonstrated in \cite{GL}, G\'{o}mez and Luengo gave an example of a Riemann surface foliation in $(\mathbb{C}^3,0)$ with discrete singular set $E = \{(0,0,0)\}$, where no separatrix exists through singular point $0$. In \cite{RR1}, J. Rebelo and H. Reis gave sufficient conditions for the existence of separatrices for a pair of germs of holomorphic vector fields $\{X,Y\}$ in $(\mathbb{C}^3,0)$. 
\medskip

The question of the existence of separatrices becomes less significant if the discreteness assumption on $E$ is dropped. When $p \in E$ is not an isolated singular point of $\mathcal{F}$, one can always construct an invariant analytic curve $\mathcal{C}$ passing through $p$. This follows from the facts that $E$ is an analytic subset and $\text{dim}(E) \ge 1$ at $p$, and one can take an analytic curve completely contained in $E$ passing through $p$.

\begin{defn}\label{D:Al}
    Let $\mathcal{F}$ be a singular Riemann surface foliation on a complex manifold $M$, with singular set $E$ of dimension $k$, where $1 \le k \le N-2$. For $1 \le l \le k$ and a local $\mathcal{F}-$invariant submanifold $\Sigma$ of dimension $l+1$, a singular point $p \in E \cap \Sigma$ is called a \textit{weakly removable singularity of order $l$ w.r.t. $\Sigma$}, if it satisfies the following conditions: 
    \begin{enumerate}
        \item $\Sigma \cap E$ is of dimension $l$,
        \item $p$ is a removable singularity of the subfoliation $\mathcal{F}\vert_{\Sigma}$, i.e., $p \notin \hat{E}_{\Sigma}$.
    \end{enumerate}
    The set of all \textit{weakly removable singularities of order $l$ w.r.t. $\Sigma$} is denoted by $A_{l, \Sigma}$. A point $p\in E$ is called a \textit{weakly removable singularity of order $l$} if $p \in A_{l,\Sigma}$ for some $\Sigma$. The collection of all such singularities is denoted by $A_{l}$. 
\end{defn}

\begin{defn}\label{D:A0}
     A singular point $p \in E$ is called a \textit{weakly removable singularity of order $0$} if there exists an analytic curve (local) $\mathcal{C}$ passing through $p$ such that $\mathcal{C} \sm \{p\}$ is contained in a leaf of $\mathcal{F}$. The set of all \textit{weakly removable singularities of order $0$} of the foliation $\mathcal{F}$ is denoted by $A_{0}$.
\end{defn}

\noindent \textbf{Remark:} From the above definition, it is evident that the points in $A_{0}$ exhibit a separatrix away from the singular set $E$. This property is stronger than simply requiring the existence of a separatrix, as we have already noted that separatrices always exist if $E$ has no $0$-dimensional component. This distinction also clarifies why the set $A_{0}$ is defined slightly differently from $A_{l}$ for $l \ge 1$. The motivation for this distinction lies in the study of the leafwise Poincar\'{e} metric. As shown in Proposition 1.10 of \cite{GV2}, the points in $A_{0}$ are precisely those that obstruct the continuity of this metric.

\begin{exam}
    Let $M = \mathbb{C}^3$ and $\mathcal{F}$ be the Riemann surface foliation on $\mathbb{C}^3$ induced by the holomorphic vector field
    \[
    X(x,y,z) = x\frac{\partial}{\partial x} + zy \frac{\partial}{\partial y} + 0 \frac{\partial}{\partial z}.
    \]
    The singular set of $\mathcal{F}$ is $E = \{(0,y,z) \in \mathbb{C}^3 : yz = 0\} = \{y-\text{axis}\} \cup \{z-\text{axis}\}$. Therefore, $E$ is $1$-dimensional and has no components of dimension $0$. Consider the hyperplane $\Sigma_{1} = \{x = 0\} \subset \mathbb{C}^3$. One can see that $\Sigma_{1}$ is invariant under $X$, and therefore $\Sigma_{1}$ is $\mathcal{F}-$invariant. Note that $\Sigma_{1} \cap E = E$ and 
    \[
    X\vert_{\Sigma_{1}}(0,y,z) = 0\frac{\partial}{\partial x} + zy \frac{\partial}{\partial y} + 0 \frac{\partial}{\partial z} = zy(0\frac{\partial}{\partial x} +  \frac{\partial}{\partial y} + 0 \frac{\partial}{\partial z}) = yz \tilde{X}(y,z),
    \]
    where $\tilde{X}(y,z) =  \frac{\partial}{\partial y} + 0 \frac{\partial}{\partial z}$ is a holomorphic vector field on $\Sigma_{1}$. By definition, the singular set of a foliation must be atleast codimension $2$, therefore the foliation $\mathcal{F}\vert_{\Sigma_{1}}$ on $\Sigma_{1}$ is induced by the vector field $\tilde{X}$. Since $\tilde{X}$ is non-vanishing, therefore every $p \in E = \Sigma_{1} \cap E$ is a removable singularity of $\mathcal{F}\vert_{\Sigma_{1}}$. Therefore, every $p \in E$ is a weakly removable singularity of order $1$ and thus $A_{1} = E$.

\medskip
    Now let $p = (0,c,0) \in E$. Consider the submanifold $\Sigma_{p} = \{(x,y,z) : z = 0, y = c\}$, and note that $\Sigma_{p}$ is $1-$dimensional, $\Sigma_{p} \cap E = \{p\}$ and is $\mathcal{F}-$invariant ($\Sigma_{p} \sm \{p\}$ is a leaf of $\mathcal{F}$). Therefore, $p$ is a weakly removable singularity of order $0$, that is, $p \in A_{0}$. Similarly we can consider the submanifold $\Sigma_{q} = \{(x,y,z) : y=0, z=c\}$ corresponding to the singular point $q = (0,0,c) \in E$. Thus, $A_{0} = A_{1} = E$.
\end{exam}

\begin{exam}
     Let $M = \mathbb{C}^3$ and $\mathcal{F}$ be the Riemann surface foliation on $\mathbb{C}^3$ induced by the holomorphic vector field
    \[
    X(x,y,z) = 2zy\frac{\partial}{\partial x} + 3x^2 \frac{\partial}{\partial y} + 0 \frac{\partial}{\partial z}.
    \]
    The singular set of $\mathcal{F}$ is $E = \{(0,y,z) \in \mathbb{C}^3 : yz = 0\} = \{y-\text{axis}\} \cup \{z-\text{axis}\}$. Therefore $E$ is $1$-dimensional and has no components of dimension $0$. Observe that $\Sigma_{0} = \{z = 0\} \subset \mathbb{C}^3$ is $\mathcal{F}-$invariant, and 
    \begin{equation}\label{E:Eq1}
    X\vert_{\Sigma_{0}}(x,y,0) = 0\frac{\partial}{\partial x} + 3x^2 \frac{\partial}{\partial y} + 0 \frac{\partial}{\partial z} = 3x^2 \tilde{X}(x,y),
    \end{equation}
    where $\tilde{X}(x,y) = 0\frac{\partial}{\partial x} +  \frac{\partial}{\partial y}$ is a holomorphic vector field defined on $\Sigma_{0}$. Since $\Sigma_{0} \cap E = \{x=z=0\} = \{y-\text{axis}\}$ and $\tilde{X}$ is non-vanishing, therefore $\{y-\text{axis}\} \subset A_{1}$. Now suppose if possible $(0,0,z_{0}) \in A_{1}$ for some $z_{0} \neq 0$. Therefore, by definition, there exists a local $\mathcal{F}-$invariant hypersurface $\Sigma$ such that $\Sigma \cap E \subset \{z-\text{axis}\}$ is 1-dimensional and $(0,0,z_{0}) \in \Sigma \cap E$ is a removable singularity of the subfoliation $\mathcal{F}\vert_{\Sigma}$. Also note that the hyperplane $\Sigma_{z_{0}} = \{z=z_{0}\}$ is $\mathcal{F}-$invariant and $\Sigma_{z_{0}} \cap E = \{(0,0,z_{0})\}$. On $\Sigma_{z_{0}}$, the foliation is induced by
    \[
    X\vert_{\Sigma_{z_{0}}}(x,y,z_{0}) = 2z_{0}y\frac{\partial}{\partial x} + 3x^2 \frac{\partial}{\partial y} + 0 \frac{\partial}{\partial z}.
    \]
    Observe that the foliation $\mathcal{F}\vert_{\Sigma_{z_{0}}}$ has exactly one separatrix passing through the singular point $(0,0,z_{0}) \in E$ which is given by the curve $\{(x,y,z_{0}) \in \Sigma_{z_{0}} : x^3 = z_{0} y^2\}$. Since $\Sigma \cap E \subset \{z-\text{axis}\}$ is 1-dimensional and $\Sigma_{z_{0}} \cap E = \{(0,0,z_{0})\}$, therefore $\Sigma \cap \Sigma_{z_{0}}$ will be 1-dimensional submanifold, and it would be $\mathcal{F}-$invariant. From the above discussion and the fact that $\Sigma \cap \Sigma_{z_{0}} \cap E = \{(0,0,z_{0})\}$, we get that the analytic curve $\mathcal{C}_{z_{0}} = \{(x,y,z_{0}) \in \mathbb{C}^3 : x^3 = z_{0} y^2, x\neq 0\}$ is a leaf of the foliation $\mathcal{F}\vert_{\Sigma}$. Since $\mathcal{C}_{z_{0}}$ is a separatrix corresponding to the singular point $(0,0,z_{0}) \in E$ and $\mathcal{C}_{z_{0}} \cup \{(0,0,z_{0})\}$ being singular at the point $(0,0,z_{0})$, we get that $(0,0,z_{0})$ cannot be a removable singularity of the subfoliation $\mathcal{F}\vert_{\Sigma}$. This is a contradiction to our above assumption. Hence $A_{1} = \{y-\text{axis}\} \subset E$.

    Note that $\{z-\text{axis}\}^{\ast} = \{(0,0,z) \vert \ z \neq 0\} \subset A_{0}$, since for each $p = (0,0,z_{0}) \in \{z-\text{axis}\}^{\ast}$, the set $\mathcal{C} = \{(x,y,z_{0}) \in \mathbb{C}^3 : x^3 = z_{0} y^2, x\neq 0\}$ is an analytic curve such that $\mathcal{C} \cap E = \{p\}$ and $\mathcal{C}\sm \{p\}$ is a leaf of $\mathcal{F}$. We claim that $A_{0} = \{z-\text{axis}\}^{\ast}$. Suppose if possible $q = (0,y,0) \in A_{0}$. By definition, there exists an invariant analytic curve $\mathcal{C}_{q}$ such that $\mathcal{C}_{q} \cap E = \{q\}$. But since $\Sigma_{z_{0}} = \{z = z_{0}\}$ is invariant for each $z_{0}$ and $q \in \Sigma_{0}$, therefore $\mathcal{C}_{q} \subset \Sigma_{0}$. We already saw from $\ref{E:Eq1}$ that the leaves of $\mathcal{F}\vert_{\Sigma_{0}}$ are given by $L = \{(x,y,0) \vert \ x =  \text{cste}\}$. Therefore $\mathcal{C}_{q} = \{(x,y,0) \vert \ x=0\} \subset E$. But this is a contradiction since $\mathcal{C}_{q} \cap E = \{q\}$. Thus $A_{0} = \{z-\text{axis}\}^{\ast} = E \sm A_{1}$.
    
\end{exam}


\begin{defn}\label{D:B_l}
 Let $\mathcal{F}$ be a singular Riemann surface foliation on a complex manifold $M$, with singular set $E$ of dimension $k$, where $1 \le k \le N-2$. For $0 \le l \le k$ and a local $\mathcal{F}-$invariant submanifold $\Sigma$ of dimension $l+1$, a singular point $p \in E \cap \Sigma$ is called a \textit{strongly removable singularity of order $l$ w.r.t. $\Sigma$}, if it satisfies the following conditions: 
    \begin{enumerate}
        \item $\Sigma \cap E$ is of dimension $l$,
        \item $p$ is a removable singularity of the subfoliation $\mathcal{F}\vert_{\Sigma}$, i.e., $p \notin \hat{E}_{\Sigma}$,
        \item the leaf $\hat{L}_{p}$ of the saturated foliation $\hat{\mathcal{F}}\vert_{\Sigma}$ satisfies $\hat{L}_{p} \subset E$.
    \end{enumerate}
    The set of all \textit{strongly removable singularity of order $l$ w.r.t. $\Sigma$} is denoted by $B_{l, \Sigma}$. A point $p\in E$ is called a \textit{strongly removable singularity of order $l$} if $p \in B_{l,\Sigma}$ for some $\Sigma$. The collection of all such singularities is denoted by $B_{l}$.
    
\end{defn}


\noindent Let $(M, \mathcal{F},E)$ be a singular Riemann surface foliation on a complex manifold $M$ and the singular set $E$ is of dimension $k$, where $1 \le k \le N-2$. For $0 \le m \le k$, define $E_{m} \subset E$ as the union of all $m-$dimensional irreducible components of $E$. Note that $E = \cup_{m=0}^{k} E_{m}$.

\begin{thm}\label{T:Removable Singular sets}
     Let $\mathcal{F}$ be a singular Riemann surface foliation on a complex manifold $M$, with singular set $E$ of dimension $k$, where $0 \le k \le N-2$. Then
     \begin{enumerate}
         \item $B_{0} = \emptyset$ and $B_{l} \subset A_{l}$, for all $0 \le l \le k$,
         \item $A_{l}, B_{l} \subset \cup_{m =l}^{k} E_{m}$, for all $1 \le l \le k$,
         \item $A_{l} \subset A_{l-1}$, for all $2 \le l \le k$,
         \item $B_{l} \subset B_{l-1}$, for all $2 \le l \le k$,
         \item $A_{l} \sm B_{l} \subset A_{0}$, for all $1 \le l \le k$.
     \end{enumerate}
\end{thm}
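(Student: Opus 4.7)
The plan is to handle the five items in order, with the main construction being an explicit choice of smooth sub-submanifold $\Sigma' \subset \Sigma$ built from a flow-box of the saturated foliation $\hat{\mathcal{F}}\vert_{\Sigma}$ at the regular point $p$; items (3)--(5) will all reduce to this one construction.

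Items (1) and (2) just unpack the definitions. For $B_{0} = \emptyset$, a witness $\Sigma$ would be a local $1$-dimensional invariant curve, and condition (3) of Definition~\ref{D:B_l} would force the whole leaf $\hat{L}_{p}^{\Sigma}$ (of dimension $1$) to lie inside $\Sigma \cap E$ (of dimension $0$), a contradiction. The inclusion $B_{l} \subset A_{l}$ is immediate since Definition~\ref{D:B_l} merely adds condition (3) to the conditions of Definition~\ref{D:Al}. For (2), a witness $\Sigma$ with $\dim(\Sigma \cap E) = l$ at $p$ forces the local dimension of $E$ at $p$ to be at least $l$, so $p$ must lie in an irreducible component of $E$ of dimension at least $l$, i.e., $p \in \cup_{m \ge l} E_{m}$.

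For (3), since $p \notin \hat{E}_{\Sigma}$, the saturated foliation $\hat{\mathcal{F}}\vert_{\Sigma}$ is regular on a neighborhood $U$ of $p$ in $\Sigma$, so I fix a flow-box $\phi : U \to \mbb D \times \mbb D^{l}$ with $\phi(p) = (x_{p}, y_{0})$ and leaves $\{y = \mathrm{const}\}$. The analytic set $E \cap U$ has dimension $l$ in $\Sigma$ (which is of dimension $l+1$). I then choose a smooth affine hyperplane $T \subset \mbb D^{l}$ of dimension $l-1$ through $y_{0}$, selected generically so that $T$ is not contained in the proper analytic subset $\Lambda = \{y \in \mbb D^{l} : \phi^{-1}(\mbb D \times \{y\}) \subset E\}$, and set $\Sigma' = \phi^{-1}(\mbb D \times T)$. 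Then $\Sigma'$ is a smooth $l$-dimensional submanifold, a union of leaves of $\hat{\mathcal{F}}\vert_{\Sigma}$ near $p$, hence local $\mathcal{F}$-invariant; and $p$, being regular for the induced regular sub-foliation on $\Sigma'$, is a removable singularity of $\mathcal{F}\vert_{\Sigma'}$. By Krull's principal ideal theorem applied to the pure $l$-dimensional part of $E \cap U$, together with $\Sigma' \not\subset E$, I get $\dim(\Sigma' \cap E) = l - 1$. Hence $p \in A_{l-1, \Sigma'} \subset A_{l-1}$.

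For (4), if $p \in B_{l}$ via $\Sigma$, then condition (3) gives $\hat{L}_{p}^{\Sigma} \subset E$; since the flow-box leaf $\phi^{-1}(\mbb D \times \{y_{0}\})$ is contained in $\Sigma'$ (as $y_{0} \in T$), the saturated leaf $\hat{L}_{p}^{\Sigma'}$ coincides with $\hat{L}_{p}^{\Sigma}$, so condition (3) is inherited and $p \in B_{l-1, \Sigma'}$. For (5), let $p \in A_{l} \sm B_{l}$ with witness $\Sigma$; the failure $p \notin B_{l,\Sigma}$ forces $\hat{L}_{p}^{\Sigma} \not\subset E$, so $\hat{L}_{p}^{\Sigma} \cap E$ is a proper analytic subset of the smooth $1$-dimensional leaf $\hat{L}_{p}^{\Sigma}$, hence discrete near $p$. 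After shrinking, $\mathcal{C} := \hat{L}_{p}^{\Sigma}$ is a local analytic curve through $p$ with $\mathcal{C} \cap E = \{p\}$, and $\mathcal{C} \sm \{p\}$ is a connected subset of $M \sm E$ locally contained in a leaf of $\mathcal{F}\vert_{\Sigma}$, hence in a single leaf of $\mathcal{F}$, giving $p \in A_{0}$. The hard part will be the dimension bookkeeping in step (3): verifying that a generic $T$ really produces $\dim(\Sigma' \cap E) = l - 1$ --- neither degenerating to $l$ (which requires identifying when $T \subset \Lambda$) nor collapsing further --- which hinges on the identification of $\Lambda$ as a proper analytic subset of $\mbb D^{l}$, a consequence of $\Sigma \not\subset E$ near $p$.
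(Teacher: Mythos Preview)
Your proposal is correct and follows essentially the same route as the paper's proof. Both arguments straighten the saturated foliation $\hat{\mathcal{F}}\vert_{\Sigma}$ near the regular point $p$ via a flow-box $\phi:U\to \mbb D\times\mbb D^{l}$ and then slice by a generic linear hyperplane through $p$ that contains the leaf direction; the paper writes this slice as $\Sigma_{A}=\{\sum_{j\ge 2}a_{j}z_{j}=0\}$ while you write it as $\mbb D\times T$, but these are the same object. Your treatment of (4) (the leaf $\hat{L}_{p}$ automatically sits in $\Sigma'$ since $y_{0}\in T$) and of (5) (discreteness of $\hat{L}_{p}\cap E$ via the identity principle, then $\mathcal{C}\sm\{p\}$ lies in a single leaf) match the paper's arguments essentially verbatim. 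The only cosmetic difference is that you name the obstruction set $\Lambda$ and invoke Krull's principal ideal theorem for the lower bound $\dim(\Sigma'\cap E)\ge l-1$, whereas the paper simply asserts the dimension count after observing $\Sigma_{A}\not\subset\phi(E\cap\Sigma)$ and $0\in\Sigma_{A}\cap\phi(E\cap\Sigma)$; your version is marginally more explicit on this point but not a different argument.
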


\noindent The following examples illustrate that the inclusions in the above result could very well be proper.

\begin{exam}\label{E:Ex1.7}
 Let $M = \mathbb{C}^4$ and $\mathcal{F}$ be the singular Riemann surface foliation on $\mathbb{C}^4$ induced by the holomorphic vector field
    \[
    X(x,y,z,w) = x\frac{\partial}{\partial x} + yz \frac{\partial}{\partial y} + yw \frac{\partial}{\partial z} +  xw \frac{\partial}{\partial w}.
    \]
    The singular set of $\mathcal{F}$ is $E =E_1 \cup E_2$, where $E_{1} = \{(x,y,z,w) \vert x=z=w=0\} = \{y-\text{axis}\}$, and $E_{2} = \{(x,y,z,w) \vert x=y=0\} = \{zw-\text{plane}\}$.
    \medskip

    \begin{enumerate}

    \item Consider $\Sigma_{xw} = \{x=w=0\}$. Note that $\Sigma_{xw}$ is $\mathcal{F}-$invariant, $E \cap \Sigma_{xw} = \{z-\text{axis}\} \cup \{y-\text{axis}\}$ and 
    \[
    X\vert_{\Sigma_{xw}}(0,y,z,0) = 0\frac{\partial}{\partial x} + yz \frac{\partial}{\partial y} + 0 \frac{\partial}{\partial z} +  0 \frac{\partial}{\partial w} = yz \frac{\partial}{\partial y}.
    \]
    Therefore, the saturated foliation $\hat{\cal{F}}\vert_{\Sigma_{xw}}$ on $\Sigma_{xw}$ is induced by the vector field $\hat{X}_{\Sigma_{xw}} \equiv \frac{\partial}{\partial y}$, which in turn gives us $\{z-\text{axis}\} \cup \{y-\text{axis}\} \subset A_{1}$, $\{z-\text{axis}\}^{\ast} \subset A_{0}$, and $\{y-\text{axis}\} \subset B_{1}$.

    \item Next we look at $\Sigma_{xc} = \{x=0,w=c\}$ for $c \neq 0$, and check that it is $\mathcal{F}-$invariant, $E \cap \Sigma_{xc} = \{(0,0,z,c) \vert z \in \mathbb{C}\}$, and 
    \[
    X\vert_{\Sigma_{xc}}(0,y,z,c) = 0\frac{\partial}{\partial x} + yz \frac{\partial}{\partial y} + yc \frac{\partial}{\partial z} +  0 \frac{\partial}{\partial w} = y \left(z \frac{\partial}{\partial y} + c \frac{\partial}{\partial z}\right).
    \]
    Again by looking at the saturated foliation in this case we get $(0,0,z,c) \in A_{0} \cap A_{1}$ for all $z \in \mathbb{C}$. Therefore $A_{1} = E$, and $E_{2} \sm \{0\} \subset A_{0}$. 
    
    \item Consider $\Sigma_{zw} = \{z=w=0\}$ and note that $\Sigma_{zw}$ is $\mathcal{F}-$invariant, $E \cap \Sigma_{zw} = \{(0,y,0,0) \vert y \in \mathbb{C}\}$, and 
    \[
    X\vert_{\Sigma_{zw}}(x,y,0,0) = x\frac{\partial}{\partial x} + 0 \frac{\partial}{\partial y} + 0 \frac{\partial}{\partial z} +  0 \frac{\partial}{\partial w} = x\frac{\partial}{\partial x}.
    \]
    By looking at the saturated foliation $\hat{\mathcal{F}}\vert_{\Sigma_{zw}}$, we get $\{y-\text{axis}\} = E_1 \subset A_{0}$ and therefore $A_{0} = E$. 

    \item By Theorem \ref{T:Removable Singular sets}, $A_2 \subset E_{2} \subsetneq E = A_{1}$, which gives $A_{2} \subsetneq A_{1}$. If we look at $\Sigma_{y} = \{y=0\}$, and check that it is $\mathcal{F}-$invariant, $E \cap \Sigma_{y} = E_{2}$, and
     \[
    X\vert_{\Sigma_{y}}(x,0,z,w) = x\frac{\partial}{\partial x} + 0 \frac{\partial}{\partial y} + 0 \frac{\partial}{\partial z} +  xw \frac{\partial}{\partial w} = x \left(\frac{\partial}{\partial x} + w\frac{\partial}{\partial w}\right).
    \]
    The saturated foliation $\hat{\mathcal{F}}\vert_{\Sigma_{y}}$ satisfies $\text{sing}(\hat{\mathcal{F}}\vert_{\Sigma_{y}}) = \emptyset$, which gives $E_{2} \subset A_{2}$. Thus $A_{2} = E_{2}$.

    \item Next we show that $\emptyset \neq B_{1} \subsetneq A_{1}$. We already saw that $E_{1} \subset B_{1}$, therefore $B_{1} \neq \emptyset$. For the other observation, consider a singular point $p = (0,0,z,w) \in E$ such that $z \neq 0$. We will prove that $p \not\in B_{1}$. To see this, we claim that it suffices to show that $\mathcal{F}$ is transversal type at $p$ (see the paragraph before Theorem \ref{T:Cont-Extn} for the definition). Observe that if $p \in B_{1}$, we can use the local structure of the subfoliation to conclude that 
    \[
    C_{p}{E} \cap \overline{C_{p}{\mathcal{F}}} \neq \{0\}.
    \]
    Therefore, $\mathcal{F}$ cannot be transversal type at $p \in B_{1} \subset E$. So our claim is verified.
    
    Now note that $C_{p}E = \{(0,0,z,w) \in \mathbb{C}^4\} = \text{Span}_{\mathbb{C}}\{(0,0,1,0), (0,0,0,1)\}$. Let $V = (0,0,v_{3}, v_{4}) \in C_{p}E \cap C_{p}\mathcal{F}$ and $p_{n} = (x_n, y_n, z_n, w_n) \in \mathbb{C}^4 \sm E$ be a sequence of non-singular points such that $p_n \to p$ and $V_{n} \in T_{p_{n}}\mathcal{F}$ be such that $V_{n} \to V$. Since the foliation $\mathcal{F}$ is defined by the holomorphic vector field $X$, there exist $\alpha_{n} \in \mathbb{C}$ such that $V_{n} = \alpha_{n}  X(p_{n}) = \alpha_{n}  (x_n, y_n z_n, y_n w_n, x_n w_n)$. Therefore we get 
    \[
    \alpha_{n}  x_n \to 0, \ \ \alpha_{n}  y_{n} z_{n} \to 0, \ \ \alpha_{n}  y_{n} w_{n} \to v_{3}, \ \ \text{and} \ \ \alpha_{n} x_{n} w_{n} \to v_{4}.
    \]
    Since $\alpha_{n} x_{n} \to 0$, and $w_{n} \to w$, therefore $\alpha_{n} x_{n} w_{n} \to 0$, that is $v_{4} = 0$. Also since $z_{n} \to z \neq 0$ and $\alpha_{n} y_{n} z_{n} \to 0$ gives us $\alpha_{n} y_{n} \to 0$. Using $w_{n} \to w$, we get $\alpha_{n} y_{n} w_{n} \to 0$ and $v_{3} = 0$. Therefore $V = \vec{0}$ and $C_{p}E \cap C_{p}\mathcal{F} = \{0\}$. Thus, $\mathcal{F}$ is transversal type at $p$, and therefore 
    \[
    B_{1} \subset \{y-\text{axis}\} \cup \{w-\text{axis}\} \subsetneq A_{1}.
    \]

    \item To see $B_{2} \subsetneq B_{1}$, observe that by Theorem \ref{T:Removable Singular sets}, $B_{2} \subset E_{2} \cap B_{1} \subset \{w-\text{axis}\}$ and by step $(1)$ in this example $\{y-\text{axis}\} \subset B_{1}$.

    \end{enumerate}    
\end{exam}

\medskip

We now demonstrate the significance of the strongly removable singular points ($B_{l}$) in the framework of singular hyperbolic foliations. In particular, we will use them to study the leafwise Poincar\'{e} metric of the corresponding hyperbolic foliation. Recall that a singular holomorphic foliation $\mathcal{F}$ on $M$ is called \textit{hyperbolic} if every leaf of $\mathcal{F}$ is a hyperbolic Riemann surface. Let $g$ be a Hermitian metric on the complex manifold $M$, and denote the length of a tangent vector $v$ with respect to $g$ by $\vert v\vert_{g}$. Define $\mathcal{O}(\mbb{D}, M\sm E)$ as the set of holomorphic maps from the unit disc $\mbb D$ into $M \sm E$. Consider the subset $\mathcal{O}(\mbb{D}, \cal{F}) \subset \mathcal{O}(\mbb{D}, M\sm E)$, consisting of holomorphic maps $f$ such that the image $f(\mbb{D})$ lies entirely within the leaf $L_{f(0)}$ of $\mathcal{F}$. The modulus of uniformization map $\eta : M \sm E \ra (0, \infty)$ is defined as
\[
\eta(p) := \sup \left \{ \vert f'(0) \vert_g : f \in  \mathcal O(\mbb D, \mathcal F), f(0) = p \right\}.
\]

\noindent It is straightforward to verify that $\eta > 0$, and the supremum is achieved by the universal covering map $\pi_p : \mbb D  \ra L_p$, which satisfies $\pi_p(0) = p$. Consequently, $\eta(p) = \vert \pi'_p(0) \vert_g$. For a leaf $L \subset \cal{F}$ and a tangent vector $v$ at a point $p \in L$, let $\la_{ L}$ denote the Poincar\'{e} metric on $L$ and $\vert v \vert_{L}$ denote the Poincar\'{e} length of $v$. By the extremal property of the Kobayashi metric, we have 
\begin{equation}\label{E:Metric Relation}
\vert v \vert_g = \eta(p) \vert v \vert_{L}.
\end{equation}

\noindent Hence, the restriction of the metric $g/{\eta}$ to a leaf $L$ naturally induces the Poincar\'{e} metric $\lambda_{L}$ on that leaf.


Note that, since the Poincar\'{e} metric $\la_{L}$ for a leaf $L$ is positive real-analytic on $L$ and $g$ is smooth on $M$, therefore $\eta$ is also smooth along leaves. However, the regularity of $\eta$ along transverse directions is not immediately apparent. In fact, from the above observation, the regularity of $\eta$ is connected to the variation of $\la_{L}$ along such directions. The regularity question of $\eta$ under various hypotheses on $M, E$ and $\mathcal F$ has been studied by Verjovsky \cite{V}, Lins Neto \cite{N1, N2}, Candel \cite{Ca} and Fornaess-Sibony \cite{FS}. A stronger estimate on the modulus of continuity of $\eta$ for hyperbolic foliations on $\mathbb{CP}^n$ was recently obtained by Dinh--Nguy\^{e}n--Sibony \cite{DNS1, DNS2} and Bacher \cite{FB} under suitable hypotheses on $E$. The significance of studying the transverse regularity of the leafwise Poincar\'{e} metric has been emphasized in the works of Nguyen (see \cite{Ng1, Ng2}), where it has been applied to investigate the ergodic properties of Riemann surface laminations.

\medskip

Much of the prior work in this area assumes the singular set $E$ to be discrete. In recent studies \cite{G1, GV2}, we explored the problem without imposing the discreteness assumption on the singular set. In \cite{GV2}, we provided a sufficient condition for the continuity of the modulus of the uniformization map $\eta$ on the non-singular set $M \sm E$, building on and strengthening a result by Fornaess–Sibony for the case where the singular set is discrete (see Theorem 20, \cite{FS}). Additionally, we discussed the possibility of a continuous extension of $\eta$ to the singular set $E$.

Our investigations revealed significant differences when the discreteness assumption on $E$ is dropped. For instance, in \cite{GV2}, we observed that the continuity of $\eta$ on the non-singular part $M \sm E$ does not necessarily imply the existence of a continuous extension to $E$, unlike the case when $E$ is discrete. Furthermore, we identified sufficient conditions on the singular set, referred to as being of \textit{transversal type}, which ensure the continuous extension of $\eta$ on $E$. These findings underscore the nuanced relationship between the structure of the singular set and the behavior of $\eta$. 

 The aim of this note is two fold. First, we utilize the strongly removable singular points $(B_{l})$ to obtain sufficient conditions for the continuity of $\eta$ on $M \sm E$, in the realm of non-discrete singular set $E$. Second, we improve the observations obtained in \cite{GV2} about the continuous extension of $\eta$ on $E$. Specifically, we prove that if the \textit{transversal type} property holds in a sufficient large subset of $E$, then we have a continuous extension of $\eta$ to whole $M$. 

\noindent Before presenting the first result, let us recall an essential object relevant to this study. For a leaf $L$ of a hyperbolic foliation $\mathcal{F}$, a holomorphic map $\phi: \mathbb{D} \to L$ is called a uniformization of $L$ if it is a covering map. As discussed in \cite{NM}, the collection of all uniformizations
\[
\mathcal{U} = \{\alpha \in \mathcal{O}(\mbb{D}, \mathcal{F}) \mid \alpha \ \text{is a uniformization of a leaf of} \ \mathcal{F}\},
\]
plays a crucial role in the analysis of foliations. The space $\mathcal U$ is said to satisfy NCP (Normal on compact parts) property if, for any subfamily $\mathcal{H} \subset \mathcal{U}$ with the set $\{\alpha(0) \mid \alpha \in \mathcal{H}\}$ being relatively compact in $M$ is itself a normal family. This property is guaranteed, for instance, when $M$ is a taut manifold.

Let $(M,\mathcal{F}, E)$ be a singular hyperbolic foliation on the complex manifold $M$. For $p \in B_{1}$, define
\[
l_{p} := \text{max}\{j \in \mathbb{N} \vert \ p \in B_{j}\}.
\]
Note that since $B_{i} \subset B_{j} \subset B_{1}$ for all $i \ge j \ge 1$, therefore $B_{j} = \{p \in B_{1} \vert \ l_{p} \ge j\}$.

\begin{thm}\label{T: Regularity of metric}
    Let $\mathcal{F}$ be a singular hyperbolic foliation on a complex manifold $M$ and singular set $E$. If \ $\mathcal{U}$ is NCP, $A_{0} \subset B_{1}$, and for each $p \in A_{0}$, there exists $(N-1 - l_{p})$ number of local $\mathcal{F}$-invariant hypersurfaces $\Sigma_{1,p}, \Sigma_{2,p}, \ldots, \Sigma_{N-1 -l_{p},p}$ such that 
    \[
    \Sigma_{p} = \cap_{i =1}^{N-1-l_{p}} \Sigma_{i,p}
    \]
    is a $(l_{p} +1)-$dimensional submanifold and $p \in B_{l_{p}, \Sigma_{p}}$. Then $\eta$ is continuous on $M \sm E$.
\end{thm}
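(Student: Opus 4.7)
The plan is to prove continuity of $\eta$ on $M \sm E$ by establishing both lower and upper semicontinuity. Lower semicontinuity is classical: given $q \in M \sm E$ and $q_{n} \to q$ in $M \sm E$, fix a uniformization $\pi_{q} \in \mathcal{U}$ with $\pi_{q}(0) = q$ and $R \in (0,1)$. The compact set $\pi_{q}(\overline{\mbb{D}_{R}}) \subset L_{q} \subset M \sm E$ lies at positive distance from $E$, and the local flow-box structure of $\mathcal{F}$ along it lets us construct, for large $n$, holomorphic disks $f_{n,R}\colon \mbb{D}_{R} \to L_{q_{n}}$ with $f_{n,R}(0) = q_{n}$ converging uniformly to $\pi_{q}|_{\mbb{D}_{R}}$. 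Extremality of $\eta$ then gives $R\,\eta(q) \le \liminf_{n} \eta(q_{n})$, and letting $R \to 1^{-}$ yields the lower bound.

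For upper semicontinuity, let $\pi_{n} \in \mathcal{U}$ be uniformizations with $\pi_{n}(0) = q_{n}$ and $|\pi'_{n}(0)|_{g} = \eta(q_{n})$. Since $\mathcal{U}$ is NCP, after passing to a subsequence we have $\pi_{n} \to \pi$ uniformly on compacts of $\mbb{D}$, where $\pi\colon \mbb{D} \to M$ is holomorphic with $\pi(0) = q$ and $|\pi'(0)|_{g} = \lim_{n} \eta(q_{n})$. It suffices to prove $\pi(\mbb{D}) \subset L_{q}$, since the extremal property for $\eta(q)$ then gives $|\pi'(0)|_{g} \le \eta(q)$. Because each $\pi_{n}(\mbb{D}) \subset L_{q_{n}} \subset M \sm E$, a standard connectedness/propagation argument in flow-boxes shows that $\pi^{-1}(E)$ is discrete in $\mbb{D}$ and that $\pi$ carries $\mbb{D} \sm \pi^{-1}(E)$ into the single leaf $L_{q}$. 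The task thus reduces to proving $\pi^{-1}(E) = \emptyset$.

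Suppose for contradiction $z_{0} \in \pi^{-1}(E)$ and set $p := \pi(z_{0}) \in E$. Then $\pi$ near $z_{0}$ is an analytic curve through $p$ whose complement in a punctured neighborhood lies in $L_{q}$, so $p \in A_{0}$ by Definition \ref{D:A0}. By hypothesis $p \in B_{1}$, and there exist local $\mathcal{F}$-invariant hypersurfaces $\Sigma_{1,p}, \dots, \Sigma_{N-1-l_{p},p}$ whose intersection $\Sigma_{p}$ has dimension $l_{p}+1$ and $p \in B_{l_{p},\Sigma_{p}}$. Let $f_{i}$ be a local defining function of $\Sigma_{i,p}$ near $p$. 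Local $\mathcal{F}$-invariance forces any leaf of $\mathcal{F}$ that meets $\Sigma_{i,p}$ near $p$ to be locally contained in $\Sigma_{i,p}$; consequently, on a small neighborhood $V$ of $z_{0}$ the holomorphic function $f_{i} \circ \pi_{n}$ (defined for large $n$) is either identically zero or nowhere zero on $V$. Passing to diagonal subsequences and applying Hurwitz to the convergence $f_{i} \circ \pi_{n} \to f_{i} \circ \pi$ on $V$, and using $(f_{i} \circ \pi)(z_{0}) = 0$, we conclude $f_{i} \circ \pi \equiv 0$ on $V$ for every $i$, whence $\pi(V) \subset \Sigma_{p}$. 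The curve $\pi|_{V}$ then lies in $\Sigma_{p}$, passes through $p$, and is tangent to $\mathcal{F}|_{\Sigma_{p}}$ on $V \sm \{z_{0}\}$ (since it lies in $L_{q} \cap \Sigma_{p}$ there). Continuity of tangent directions together with regularity of the saturated foliation $\hat{\mathcal{F}}|_{\Sigma_{p}}$ at $p$ forces $\pi'(z_{0}) \in T_{p} \hat{L}_{p}$; by uniqueness of integral curves of a regular foliation, $\pi|_{V}$ coincides locally with $\hat{L}_{p}$. Since $\hat{L}_{p} \subset E$ by strong removability, this forces $\pi(V) \subset E$, contradicting $\pi(V \sm \{z_{0}\}) \subset L_{q} \subset M \sm E$.

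The main obstacle will be rigorously propagating the tangency of $\pi|_{V \sm \{z_{0}\}}$ to $\mathcal{F}|_{\Sigma_{p}}$ up to the tangency $\pi'(z_{0}) \in T_{p} \hat{L}_{p}$, from which the local uniqueness of integral curves produces the contradiction. This is precisely where the hypothesis that $\Sigma_{p}$ arises as a complete intersection of $(N-1-l_{p})$ invariant hypersurfaces is essential: without it, a separatrix through $p$ could a priori escape $\Sigma_{p}$ and the Hurwitz step forcing $\pi(V) \subset \Sigma_{p}$ would break down.
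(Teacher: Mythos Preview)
Your approach is essentially the same as the paper's: the heart of both arguments is (i) using Hurwitz on the local defining functions of the invariant hypersurfaces $\Sigma_{i,p}$ to trap the limit curve inside $\Sigma_p$, and then (ii) using that $p$ is a \emph{regular} point of the saturated foliation $\hat{\mathcal{F}}|_{\Sigma_p}$ to force the curve into the leaf $\hat{L}_p \subset E$, producing the contradiction. The paper packages the reduction differently---it quotes an equivalence from \cite{GV2} (continuity of $\eta$ $\Leftrightarrow$ ``$\alpha(0)\in E \Rightarrow \alpha(\mbb D)\subset E$'') rather than arguing lower/upper semicontinuity directly---but this is just framing. Your route has the pleasant side effect that case~(a) of the paper's Proposition (the case $p\notin A_0$) never arises: since your $\pi$ already starts at $q\notin E$, any $p=\pi(z_0)\in E$ automatically lies in $A_0$ via the curve $\pi$ itself.

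One point to tighten: the phrase ``$\pi'(z_0)\in T_p\hat L_p$; by uniqueness of integral curves of a regular foliation, $\pi|_V$ coincides locally with $\hat L_p$'' is not the right justification. Tangency of $\pi'(z_0)$ to $\hat L_p$ at a single point does not by itself give containment (consider $z\mapsto(z,z^2)$ and the leaf $\{z_2=0\}$), and $\pi'(z_0)$ could vanish. The clean argument---which is what the paper does implicitly---is to pass to a flow-box for $\hat{\mathcal F}|_{\Sigma_p}$ near $p$ where the foliation is $\partial/\partial z_1$; then $\pi(V\setminus\{z_0\})$ lying in a single plaque means the transverse coordinates $\pi_2,\dots,\pi_{l_p+1}$ are constant on $V\setminus\{z_0\}$, hence on $V$, hence equal to $0$ since $\pi(z_0)=p$. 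This gives $\pi(V)\subset\hat L_p$ directly, with no appeal to tangent vectors or ODE uniqueness.
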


\noindent The following corollary is immediate, with $l_{p} = N-2$ for all $p \in E$.

\begin{cor}\label{C: Regularity of metric}
     Let $\mathcal{F}$ be a singular hyperbolic foliation on a complex manifold $M$ and singular set $E$. If \ $\mathcal{U}$ is NCP, and $A_{0} \subset B_{N-2}$, then $\eta$ is continuous on $M \sm E$.
\end{cor}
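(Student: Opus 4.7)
The plan is to read off Corollary \ref{C: Regularity of metric} as a direct specialization of Theorem \ref{T: Regularity of metric} under the uniform choice $l_p = N - 2$ for every $p \in A_0$. Under this assignment the combinatorial hypersurface condition of the theorem degenerates into a single-hypersurface condition, which is precisely what the hypothesis $A_0 \subset B_{N-2}$ guarantees.

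First I would verify that $l_p = N - 2$ for every $p \in A_0$. The inequality $l_p \geq N - 2$ is immediate from $A_0 \subset B_{N-2}$ together with the definition of $l_p$. For the reverse inequality, I would appeal to Definition \ref{D:B_l}: any witness $\Sigma$ for $p \in B_j$ must be a local $\mathcal{F}$-invariant submanifold of dimension $j+1$ with $\dim(\Sigma \cap E) = j$, which forces $j \leq \dim E \leq N - 2$. Hence $B_j = \emptyset$ for $j > N - 2$, and $l_p = N - 2$.

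With $l_p = N - 2$ in hand, the numerology of Theorem \ref{T: Regularity of metric} collapses: one needs $N - 1 - l_p = 1$ local $\mathcal{F}$-invariant hypersurface, so the intersection $\Sigma_p$ is itself a single hypersurface of the required dimension $l_p + 1 = N - 1$, and the condition $p \in B_{l_p, \Sigma_p}$ is nothing other than the definition of $p \in B_{N-2}$. Any hypersurface witnessing $p \in B_{N-2}$ can therefore be taken as $\Sigma_{1,p}$. The remaining hypotheses of Theorem \ref{T: Regularity of metric} come for free: the NCP property of $\mathcal{U}$ is assumed, and $A_0 \subset B_1$ follows from $A_0 \subset B_{N-2} \subset B_1$ by iterating part (4) of Theorem \ref{T:Removable Singular sets}. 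Applying Theorem \ref{T: Regularity of metric} then yields the continuity of $\eta$ on $M \setminus E$.

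There is no substantive obstacle here: the entire content of the corollary is the observation that, under the choice $l_p = N - 2$, the hypersurface bookkeeping in Theorem \ref{T: Regularity of metric} trivializes to the single inclusion $A_0 \subset B_{N-2}$, so the proof reduces to a brief dimension count and an invocation of the theorem.
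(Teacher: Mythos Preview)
Your proposal is correct and matches the paper's own treatment: the paper simply declares the corollary ``immediate, with $l_p = N-2$ for all $p \in E$'' and moves on. You have spelled out exactly why that specialization works---the dimension count forcing $l_p = N-2$, the collapse of the hypersurface condition to a single $\Sigma_p$ of dimension $N-1$, and the chain $B_{N-2}\subset\cdots\subset B_1$ from Theorem~\ref{T:Removable Singular sets}(4)---which is precisely the content the paper leaves implicit.
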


\begin{exam}
    Let $M = \mathbb{D}^4 \subset \mathbb{C}^4$ and $\mathcal{F}$ be the singular hyperbolic foliation on $M$ induced by the holomorphic vector field
    \[
    X(x,y,z,w) = y e^{y}\frac{\partial}{\partial x} + y e^{w} \frac{\partial}{\partial y} + z \frac{\partial}{\partial z} + x \frac{\partial}{\partial w}.
    \]
    The singular set of $\mathcal{F}$ is $E = \{(0,0,0,w) \vert \ w \in \mathbb{D}\}  = \{w-\text{axis}\} \cap M = E_{1}$. Since $E_{2} = \emptyset$, therefore $B_{2} = \emptyset$ as well. Note that the hypersurfaces $\Sigma_{y} = \{y = 0\}$ and $\Sigma_{z} = \{z = 0\}$ are $\mathcal{F}-$invariant. Consider $\Sigma_{yz} = \Sigma_{y} \cap \Sigma_{z} = \{y=z=0\}$. It is easy to check that $\Sigma_{yz}$ is an $\mathcal{F}-$invariant submanifold of $M$ of dimension $2$, and the subfoliation $\mathcal{F}\vert_{\Sigma_{yz}}$ is induced by
    \[
    X\vert_{\Sigma_{yz}}(x,0,0,w) = x \frac{\partial}{\partial w}.
    \]
    Clearly the saturated subfoliation $\hat{\mathcal{F}}\vert_{\Sigma_{yz}}$ is non-singular, and in fact $\{(0,0,0,w) \vert \ w \in \mathbb{D}\}$ is a leaf of $\hat{\mathcal{F}}\vert_{\Sigma_{yz}}$. Therefore $E \subset B_{1, \Sigma_{yz}}$, which in turn gives that $A_{0} \subset B_{1} = E$. Now one can use Theorem \ref{T: Regularity of metric} to deduce the continuity of $\eta$ on $M \setminus E$.
\end{exam}

\medskip
\noindent The following example is motivated from \cite{RR2} (see Theorem 1).
\begin{exam}
     Let $M = \mathbb{D}^3 \subset \mathbb{C}^3$ and $\mathcal{F}$ be the singular hyperbolic foliation on $M$ induced by the holomorphic vector field
    \[
    X(x,y,z) = (P(y) + z \tilde{f}(x,y,z)) \frac{\partial}{\partial x} + z \tilde{g}(x,y,z) \frac{\partial}{\partial y} + z^n \frac{\partial}{\partial z},
    \]
    where $P(y)$ is a polynomial of degree $k \ge 1$, satisfying $P(0) = 0$. Suppose $S = \{0, c_{1}, c_{2}, \ldots, c_{k-1}\}$ be the set of zeroes of $P$. The singular set of $\mathcal{F}$ is 
    \[
    E = \{(x,0,0) \vert \ x \in \mathbb{D}\} \cup (\bigcup_{i, \vert c_{i}\vert < 1} \{(x,c_{i},0) \vert \ x \in \mathbb{D}\}).
    \]
    Consider the $\mathcal{F}-$invariant hypersurface $\Sigma_{z} = \{z=0\}$. On $\Sigma_{z}$,
    \[
    X\vert_{\Sigma_{z}}(x,y,0) = P(y) \frac{\partial}{\partial x}.
    \]
    It is now clear that the saturated subfoliation $\hat{\mathcal{F}}\vert_{\Sigma_{z}}$ is non-singular on $\Sigma_{z}$, and $\{(x,c,0) \vert \ x \in \mathbb{D}\}$ is a leaf of $\hat{\mathcal{F}}\vert_{\Sigma_{z}}$ for all $c \in S \cap \mathbb{D}$. Therefore $B_{1} = E$, and by Corollary \ref{C: Regularity of metric} we get the continuity of $\eta$ on $M \sm E$.
\end{exam}

\medskip
\begin{exam}
    Let $M = \mathbb{D}^N \subset \mathbb{C}^N$ and $\mathcal{F}$ be the singular hyperbolic foliation on $M$ induced by the holomorphic vector field
    \[
    X(x_{1},x_{2}, \ldots, x_{N}) = x^{R_{1}} x_{N}^{-1} \frac{\partial}{\partial x_{1}} + \sum_{i=2}^{N}{x^{R_{i}} x_{i-1}^{-1} \frac{\partial}{\partial x_{i}}},
    \]
    where $x=(x_{1}, x_{2}, \ldots, x_{N}) \in M$, $R_{i} = (a_{1,i}, a_{2,i}, \ldots, a_{N,i}) \in \mathbb{N}^N$ be such that $a_{i-1,i} = 1$ for $i \ge 2$ and $a_{N,1} =1$, and $x^{R_{i}} = \prod_{j=1}^{N}{{x_{j}}^{a_{j,i}}}$. The singular set of $\mathcal{F}$ is $E = \cup_{i \neq j}\{x_{i} = x_{j} = 0\} = E_{N-2}$. Check that $\Sigma_{i} = \{x_{i} = 0\}$ is $\mathcal{F}-$invariant for all $1 \le i \le N$, and the subfoliation $\mathcal{F}\vert_{\Sigma_{i}}$ for $1 \le i \le N-1$, is induced by
    \[
     X\vert_{\Sigma_{i}}(x_{1},x_{2}, \ldots, x_{i-1}, 0, x_{i+1}, \ldots x_{N}) = x^{R_{i+1}} x_{i}^{-1} \frac{\partial}{\partial x_{i+1}}.
    \]
    It is evident from above that the saturated subfoliation $\hat{\mathcal{F}}\vert_{\Sigma_{i}}$ is non-singular, and in fact $\{x_{i} = x_{j} = 0\} \subset B_{N-2, \Sigma_{i}} \subset B_{N-2}$ for all $j \neq i, i+1$. But with the same argument $\{x_{i} = x_{i+1} = 0\} \subset B_{N-2, \Sigma_{i+1}}$.

    \noindent Similar calculations on $\Sigma_{N}$ will give us $E = B_{N-2}$, and then Corollary \ref{C: Regularity of metric} give us the continuity of $\eta$ on $M \sm E$. 
\end{exam}
\medskip

The next theorem establishes the continuity of the modulus of uniformization map $\eta$ on $M \sm E$ by leveraging Theorem \ref{T: Regularity of metric} and Corollary 1.13 in \cite{GV2}. To achieve this, we introduce two key properties of singular points that have been used in the above mentioned results. We say
\begin{itemize}
    \item $p \in E$ satisfies property $(L)$ if $p \in B_{1}$, and there exist $(N-1 - l_{p})$ number of local $\mathcal{F}$-invariant hypersurfaces $\Sigma_{1,p}, \Sigma_{2,p}, \ldots, \Sigma_{N-1 -l_{p},p}$ such that 
    \[
    \Sigma_{p} = \cap_{i =1}^{N-1-l_{p}} \Sigma_{i,p}
    \]
    is a $(l_{p} +1)-$dimensional submanifold and $p \in B_{l_{p}, \Sigma_{p}}$,

    \item $p \in E$ satisfies property $(M)$ if there exist $(N-k)$ number of local $\mathcal{F}-$invariant hypersurfaces $\tilde{\Sigma}_{1,p}, \tilde{\Sigma}_{2,p}, \ldots, \tilde{\Sigma}_{N-k,p}$ such that 
    \[
    p \in \cap_{i=1}^{N-k}{\tilde{\Sigma}_{i,p}} \subset E,
    \]
    where $k = \text{dim}_{p}E$.
\end{itemize}
We define $E_{L} := \{p \in E : p \ \text{satisfies property} \ (L)\}$ and $E_{M} := \{p \in E : p \ \text{satisfies property} \ (M)\}$. Note that the continuity of $\eta$ is ensured if either $A_{0} \subset E_{L}$ (by Theorem \ref{T: Regularity of metric}) or $E = E_{M}$ (by Corollary 1.13 in \cite{GV2}).

\begin{thm}\label{T:Mix Regularity}
     Let $\mathcal{F}$ be a singular hyperbolic foliation on a complex manifold $M$ and singular set $E$. If \ $\mathcal{U}$ is NCP, and $A_{0} \subset E_{L} \cup E_{M}$, then the modulus of uniformization map $\eta$ is continuous on $M \sm E$.
\end{thm}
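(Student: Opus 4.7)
The plan is to assume continuity of $\eta$ fails at some point $q \in M \sm E$ and derive a contradiction by analyzing limits of uniformizations and invoking the hypothesis $A_{0} \subset E_{L} \cup E_{M}$ pointwise. I would proceed exactly as in the opening of the proof of Theorem \ref{T: Regularity of metric}: pick $q_{n} \to q$ with $\eta(q_{n}) \not\to \eta(q)$ and let $\pi_{n} = \pi_{q_{n}} : \mathbb{D} \to L_{q_{n}}$ be the corresponding uniformizations with $\pi_{n}(0) = q_{n}$ and $|\pi'_{n}(0)|_{g} = \eta(q_{n})$. The NCP property of $\mathcal{U}$ (combined, if necessary, with passage through automorphisms of $\mathbb{D}$) yields a subsequential limit $\phi : \mathbb{D} \to M$ with $\phi(0) = q$. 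By Proposition 1.10 of \cite{GV2}, the standard dichotomy is that either $\phi$ is a uniformization of $L_{q}$ (which by the extremal property \eqref{E:Metric Relation} would force $\eta(q_{n}) \to \eta(q)$, contrary to our choice), or $\phi(\mathbb{D})$ meets $E$ at some point $p \in A_{0}$. Hence there exists an obstructing point $p \in \phi(\mathbb{D}) \cap A_{0}$, and by hypothesis $p \in E_{L}$ or $p \in E_{M}$.

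\textbf{Case 1} ($p \in E_{L}$): Here $p \in B_{1}$ and there exist local $\mathcal{F}$-invariant hypersurfaces $\Sigma_{1,p}, \ldots, \Sigma_{N-1-l_{p},p}$ whose intersection $\Sigma_{p}$ is an $(l_{p}+1)$-dimensional submanifold with $p \in B_{l_{p}, \Sigma_{p}}$. Since each $\Sigma_{i,p}$ is $\mathcal{F}$-invariant and closed analytic, any leaf meeting a small neighborhood of $p$ that touches $\Sigma_{i,p}$ must be contained in $\Sigma_{i,p}$ in that neighborhood; this invariance passes to the limit $\phi$, confining $\phi(\mathbb{D})$ to $\Sigma_{p}$ near $\phi^{-1}(p)$. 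We are now precisely in the setup of Theorem \ref{T: Regularity of metric} applied to the saturated subfoliation $\hat{\mathcal{F}}|_{\Sigma_{p}}$ at $p$, and the argument there produces the contradiction.

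\textbf{Case 2} ($p \in E_{M}$): Here there exist $(N-k)$ local $\mathcal{F}$-invariant hypersurfaces $\tilde{\Sigma}_{1,p}, \ldots, \tilde{\Sigma}_{N-k,p}$ with $p \in \cap_{i} \tilde{\Sigma}_{i,p} \subset E$, where $k = \dim_{p} E$. By the same invariance/closedness principle, $\phi(\mathbb{D})$ near $\phi^{-1}(p)$ is trapped in each $\tilde{\Sigma}_{i,p}$ and hence in their intersection, which lies inside $E$. But $\phi$ cannot be locally contained in $E$ since $\phi(0) = q \notin E$; this contradicts the identity principle for holomorphic maps to the analytic set $E$, and is exactly the mechanism of Corollary 1.13 in \cite{GV2}.

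In either case we reach a contradiction, so $\eta$ is continuous on $M \sm E$. The main subtlety I anticipate is bookkeeping: the hypothesis $A_{0} \subset E_{L} \cup E_{M}$ only tells us that \emph{each} obstructing point belongs to one of the two classes, so one must apply the $(L)$ or $(M)$ argument independently at each such $p$, rather than globally. The two ingredient results (Theorem \ref{T: Regularity of metric} and Corollary 1.13 of \cite{GV2}) are already pointwise in nature, so the merge is essentially local. The only genuinely delicate step is propagating invariance to the limit map $\phi$; this rests on the fact that each invariant hypersurface is a closed analytic subset, so that the limit of leaves contained in it remains contained in it, a feature used implicitly in both precursor arguments.
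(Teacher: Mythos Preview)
Your approach is correct and rests on the same core mechanism as the paper's proof---trapping the limit map inside the invariant hypersurfaces via Hurwitz's theorem, and then using either the $B_{l_{p},\Sigma_{p}}$ structure (for $E_{L}$) or the containment $\cap_{i}\tilde{\Sigma}_{i,p} \subset E$ (for $E_{M}$) to force the limit locally into $E$---but the framing differs. The paper does not argue by contradiction from a base point $q \notin E$; instead it invokes Result~\ref{R:Result1} (the equivalence $(1)\Leftrightarrow(4)$) to reduce directly to the statement: if $\{\alpha_{n}\}\subset\mathcal{U}$ converges to $\alpha$ with $\alpha(0)=p\in E$, then $\alpha(\mathbb{D})\subset E$. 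It then splits into three cases ($p\notin A_{0}$, $p\in E_{L}$, $p\in E_{M}$); the first two are already handled by Proposition~\ref{P:Prop1}, and the third is dispatched exactly as in your Case~2, followed by Lemma~\ref{L:Lemma1}. This framing is a bit cleaner: it avoids the dichotomy detour through Proposition~1.10 of \cite{GV2}, and it places the analysis at $\alpha(0)=p$ rather than at some interior preimage $z_{0}$ with $\phi(z_{0})=p$. In your version, to literally quote the argument of Proposition~\ref{P:Prop1} in Case~1 you must recenter by an automorphism of $\mathbb{D}$ sending $z_{0}$ to $0$, and then the ``contradiction'' you allude to is that $\phi$ maps a disc about $z_{0}$ into $\hat{L}_{p}\subset E$, whence $\phi(\mathbb{D})\subset E$ by the identity principle, contradicting $\phi(0)=q\notin E$. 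Both routes are valid and the local analysis is identical.
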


\begin{exam}
    Let $M = \mathbb{D}^4 \subset \mathbb{C}^4$ and $\mathcal{F}$ be the singular hyperbolic foliation on $M$ induced by the holomorphic vector field
    \[
    X(x,y,z,w) = x\frac{\partial}{\partial x} + yz \frac{\partial}{\partial y} + yw \frac{\partial}{\partial z} +  xw \frac{\partial}{\partial w}.
    \]
    The singular set of $\mathcal{F}$ is $E =E_1 \cup E_2$, where $E_{1} = \{(x,y,z,w) \in M \vert x=z=w=0\} = \{y-\text{axis}\}$, and $E_{2} = \{(x,y,z,w) \in M \vert x=y=0\} = \{zw-\text{plane}\}$. 
    
    \noindent We already observed in Example \ref{E:Ex1.7}, that $A_{0} = E$, and $E_{1} \subset B_{1} \subset E_{1} \cup \{(0,0,0,w) \vert w \in \mathbb{D}\}$. For $p \in E_{1}$, we can check that $l_{p} = 1$ and $p \in B_{1, \Sigma_{xw}}$, where $\Sigma_{xw} = \{x=w=0\} = \{x=0\} \cap \{w=0\} = \Sigma_{x} \cap \Sigma_{w}$. Since $\Sigma_{x}, \Sigma_{w}$ are $\mathcal{F}-$invariant, therefore $E_{1} \subset E_{L}$.

    \noindent Recall that $\text{dim}_{p}E = 2$ for $p \in E_{2}$, and $E_{2} = \{x=0\} \cap \{y=0\} = \Sigma_{x} \cap \Sigma_{y}$. Once again, using the fact that $\Sigma_{x}, \Sigma_{y}$ are $\mathcal{F}-$invariant, we get $E_{2} \subset E_{M}$.

    \noindent Therefore we get $A_{0} = E = E_{1} \cup E_{2} \subset E_{L} \cup E_{M}$, that is 
    \[
    A_{0} = E_{L} \cup E_{M}.
    \]
    Thus, by Theorem \ref{T:Mix Regularity}, the map $\eta$ is continuous on $M \sm E$.
\end{exam}

\medskip
Next we talk about the extension of the map $\eta$ to the singular set $E$. In \cite{GV2} (Theorem 1.8), it was proved that being \textit{transversal type} is a sufficient condition for the continuous extension of $\eta$ to the singular set. Recall that, a foliation $\mathcal{F}$ is transversal type at a singular point $p \in E$ if there exists a neighborhood $U_{p}$ of $p$ such that for all $q \in U_{p} \cap E$, 
\[
\overline{C_{q}\mathcal{F}} \cap C_{q}E = \{0\},
\]
where $C_{p}E = \{v\in T_{p}M :  \text{there exists }  \{q_{n}\}_{n \ge1} \subset \text{reg}(E), \ v_{n} \in T_{q_n}E \ \text{such that} \ (q_{n}, v_n) \to (p,v)\}$ is the Whitney's $C_{4}-$tangent cone (see \cite{Ch}), and $C_{p}\mathcal{F} = \{v\in T_{p}M :  \text{there exists }  \{q_{n}\}_{n \ge1} \subset M \sm E, \ v_{n} \in T_{q_n}\mathcal{F} \ \text{such that} \ (q_{n}, v_n) \to (p,v)\}$.

\begin{thm}\label{T:Cont-Extn}
    Let $\mathcal{F}$ be a singular hyperbolic foliation on a complex manifold $M$ and singular set $E$. Suppose that $\mathcal{U}$ is NCP and $\eta$ is continuous on $M \sm E$. If the set $E \sm \{p \in E \vert \ \mathcal{F} \ \text{is transversal type at} \ p\}$ is discrete, then $\eta$ has a continuous extension to whole $M$.
\end{thm}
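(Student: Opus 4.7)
The plan is to partition $E$ into the set $E_T$ of transversal type singular points and its complement $D := E \setminus E_T$, which is discrete by hypothesis. At each $p \in E_T$, Theorem 1.8 of \cite{GV2} directly yields a continuous extension of $\eta$ at $p$; combined with the hypothesized continuity of $\eta$ on $M \setminus E$, this produces a continuous function $\eta : M \setminus D \to (0, \infty)$. It therefore remains to extend $\eta$ continuously across the discrete exceptional set $D$.

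Fix $p \in D$ and a neighborhood $U_p$ with $U_p \cap D = \{p\}$, so $\eta$ is continuous on $U_p \setminus \{p\}$ by the previous step. By NCP the family $\{\pi_q : q \in \overline{U_p} \setminus E\}$ is normal, so $\eta$ is uniformly bounded near $p$. Given any sequence $q_n \to p$ with $q_n \in M \setminus E$, pass to a subsequence along which the uniformizations $\pi_n := \pi_{q_n}$ converge uniformly on compacta to some $\pi_\infty : \mbb D \to M$ with $\pi_\infty(0) = p$ and $|\pi_\infty'(0)|_g = \lim_n \eta(q_n)$.

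The transformation formula $\eta(\pi_q(w)) = (1 - |w|^2)|\pi_q'(w)|_g$, which follows from \eqref{E:Metric Relation} applied to the covering map $\pi_q$ together with the standard change-of-variables for the Poincar\'{e} metric on $\mbb D$, passes to the limit. Indeed, for any $w \in \mbb D$ with $\pi_\infty(w) \in M \setminus D$ (a generic condition, since $\pi_\infty^{-1}(D)$ is either discrete in $\mbb D$ or all of $\mbb D$), the continuity of $\eta$ on $M \setminus D$ and the local uniform convergence $\pi_n \to \pi_\infty$ yield
\[
\eta(\pi_\infty(w)) = (1 - |w|^2)|\pi_\infty'(w)|_g.
\]
Letting $w \to 0$ through such generic values gives $\eta(\pi_\infty(w)) \to |\pi_\infty'(0)|_g$, exhibiting the candidate value $\eta(p) := |\pi_\infty'(0)|_g$ as a radial limit of $\eta$ along the image of $\pi_\infty$.

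The main obstacle is to prove that this candidate is independent of the chosen subsequence, so that it actually defines a continuous extension. If two subsequential limits $\pi^{(1)}, \pi^{(2)}$ produced distinct values $L_1 \ne L_2$, each would exhibit a holomorphic disk through $p$ along which $\eta$ radially approaches a different limit. I would rule this out by combining two ingredients: first, the function $\log(1/\eta)$ is plurisubharmonic on $M \setminus E$, a standard feature of hyperbolic foliations (cf.\ \cite{FS, DNS1}), and being locally bounded above on $U_p \setminus \{p\}$ it extends plurisubharmonically across the isolated point $p$ via its upper envelope, providing a lower semi-continuous extension of $\eta$; second, the NCP argument above shows that every subsequential limit of $\eta$ at $p$ is realised as $|\pi_\infty'(0)|_g$ for some limit uniformization, producing a matching upper semi-continuous bound whose value on each subsequence is computed by the radial limit identity. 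Reconciling these two forces $\limsup_{q \to p} \eta(q) = \liminf_{q \to p} \eta(q)$, so $\eta(p) := \lim_{q \to p} \eta(q)$ is well-defined and the resulting function is continuous on all of $M$.
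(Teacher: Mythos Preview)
Your argument contains a genuine gap, and it stems from a misconception about the value of the extension on $E_T$. Theorem~1.8 of \cite{GV2} does not give a positive continuous extension; it shows that $\eta$ extends continuously \emph{by zero} at transversal type points (this is exactly the $\tilde\eta$ in the paper's proof). So your claimed map $\eta : M \setminus D \to (0,\infty)$ should have codomain $[0,\infty)$, with $\eta \equiv 0$ on $E_T$. This immediately breaks your plurisubharmonicity step: on any neighborhood $U_p$ of $p \in D$ the set $E_T \cap U_p$ is nonempty (indeed $E$ has positive dimension at $p$ and all nearby singular points are in $E_T$), and there $\log(1/\eta) = +\infty$. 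Hence $\log(1/\eta)$ is \emph{not} locally bounded above on $U_p \setminus \{p\}$, and the removable-singularity extension for psh functions does not apply. The final ``reconciling'' sentence is therefore unsupported.

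Ironically, your own limit identity $\eta(\pi_\infty(w)) = (1-|w|^2)\,|\pi_\infty'(w)|_g$ already contains the seed of a correct proof, once you use one fact you omitted: since $\eta$ is continuous on $M\setminus E$, Result~\ref{R:Result1}(4) forces $\pi_\infty(\mathbb D) \subset E$. Then for generic $w$ (i.e.\ $w \notin \pi_\infty^{-1}(D)$, a discrete set unless $\pi_\infty$ is constant) one has $\pi_\infty(w)\in E_T$, so the left side of your identity vanishes and $\pi_\infty'(w)=0$; thus $\pi_\infty$ is constant and $|\pi_\infty'(0)|_g=0$ for \emph{every} subsequential limit. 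This is essentially what the paper does, though with a slightly different mechanism: instead of passing your formula to the limit, the paper observes directly that $\alpha'(z) \in C_{\alpha(z)}\mathcal F \cap C_{\alpha(z)}E$ for the limit map $\alpha$, and the transversality condition at $\alpha(z)\in E_T$ forces $\alpha'(z)=0$. Either route yields the same conclusion: the extension value at every $p\in E$ is $0$, and no separate uniqueness argument is needed.
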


\begin{exam}
     Let $M = \mathbb{D}^3 \subset \mathbb{C}^3$ and $\mathcal{F}$ be the singular hyperbolic foliation on $M$ induced by the holomorphic vector field
    \[
    X(x,y,z) = x(y+z)\frac{\partial}{\partial x} + y(x+z) \frac{\partial}{\partial y} + z(x+y) \frac{\partial}{\partial z}.
    \]
    The singular set of $\mathcal{F}$ is $E = \{(x,0,0) \vert \ x \in \mathbb{D}\} \cup \{(0,y,0) \vert \ y \in \mathbb{D}\} \cup \{(0,0,z) \vert \ z \in \mathbb{D}\}$. Using the fact that $\Sigma_{x} = \{x=0\}, \Sigma_{y} = \{y=0\}$ and $\Sigma_{z} = \{z=0\}$ are $\mathcal{F}-$invariant and $\{(x,0,0) \vert \ x \in \mathbb{D}\} = \Sigma_{y} \cap \Sigma_{z}$, $\{(0,y,0) \vert \ y \in \mathbb{D}\} = \Sigma_{x} \cap \Sigma_{z}$ and $\{(0,0,z) \vert \ z \in \mathbb{D}\} = \Sigma_{x} \cap \Sigma_{z}$, we get $E = E_{M}$. Therefore, Theorem \ref{T:Mix Regularity} gives us the continuity of $\eta$ on $M \sm E$. 
    
    Next we prove that $\mathcal{F}$ is transversal type at each $p \in E \sm \{0\}$. Let $p = (x,0,0) \in E$ be such that $x \neq 0$ and $V = (v_{1},v_{2}, v_{3}) \in C_{p}E$. Since $C_{p}E = \text{Span}_{\mathbb{C}}\{(1,0,0)\}$, therefore $v_{2} = v_{3} = 0$. Suppose $p_{n} = (x_{n}, y_{n}, z_{n}) \in M \sm E$ and $V_{n} \in T_{p_n}\mathcal{F}$ be such that $(p_{n}, V_{n}) \to (p,V)$. Write $V_{n} = \alpha_{n} X(p_{n}) = \alpha_{n} \left(x_{n}(y_{n} + z_{n}), y_{n}(x_{n}+z_{n}), z_{n}(x_{n}+y_{n})\right)$ for some $\alpha_{n} \in \mathbb{C}$. We get
    \begin{equation}\label{E:Eq3}
    \alpha_{n} x_{n}(y_{n}+z_{n}) \to v_{1}, \ \alpha_{n} y_{n}(x_{n}+z_{n}) \to 0, \ \& \ \alpha_{n} z_{n}(x_{n}+y_{n}) \to 0.
    \end{equation}
    Since $(x_{n} + z_{n}), (x_{n} + y_{n}) \to x \neq 0$, we get from \ref{E:Eq3} that $\alpha_{n} y_{n}, \alpha_{n} z_{n} \to 0$. Therefore 
    \[
    \alpha_{n} x_{n} (y_{n} + z_{n}) = x_{n} (\alpha_{n} y_{n} + \alpha_{n} z_{n}) \to x(0+0) = 0 = v_{1}.
    \]
    Thus $V = (0,0,0)$ and $\mathcal{F}$ is transversal type at $p$. Exactly similar calculations will prove that $\mathcal{F}$ is transversal type at each $p \in E \sm \{0\}$. 

    \noindent For $p = (0,0,0) \in E$, note that 
    \[
    C_{0}E = \text{Span}_{\mathbb{C}}\{(1,0,0)\} \cup \text{Span}_{\mathbb{C}}\{(0,1,0)\} \cup \text{Span}_{\mathbb{C}}\{(0,0,1)\}.
    \]
    Fix $V= (1,0,0) \in C_{0}E$. Consider $p_{n} = \left(\frac{1}{n}, \frac{-1}{n},\frac{-1}{n}\right) \in M \sm E$, and $\alpha_{n} = \frac{-n^2}{2}$. We have $p_{n} \to 0$ and
    \[
    V_{n} = \alpha_{n} X(p_{n}) = \frac{-n^2}{2} \left(\frac{1}{n}\left(\frac{-1}{n}+ \frac{-1}{n}\right), \frac{-1}{n}\left(\frac{1}{n}+ \frac{-1}{n}\right), \frac{-1}{n}\left(\frac{1}{n}+ \frac{-1}{n}\right)\right)  = (1,0,0) = V.
    \]
    We get $(p_{n}, V_{n}) \to (0,V)$ for $V= (1,0,0) \neq 0$. Therefore, $\mathcal{F}$ is not transversal type at $0 \in E$. But Theorem \ref{T:Cont-Extn} tells us that there is still a continuous extension of $\eta$ on $M$.
\end{exam}



\section{Proof of Theorem \ref{T:Removable Singular sets}}
\noindent\begin{enumerate}
    \item It is immediate, according to the definitions \ref{D:Al} and \ref{D:B_l}.
    \medskip
    
    \item Fix $1 \le l \le k$, and let $p \in A_{l}$. By definition \ref{D:Al}, there exists an $\mathcal{F}-$invariant submanifold $\Sigma$ of dimension $l+1$ such that $\Sigma \cap E$ is of dimension $l$. Therefore, $\text{dim}_{p}E \ge l$ which gives $p \notin E_{m}$ for $m < l$. The fact that $p \in E$ and $B_{l} \subset A_{l}$ gives us $B_{l}, A_{l} \subset \cup_{m=l}^{k}E_{m}$.
    \medskip

    \item Suppose $p \in A_{l}$ for some $2 \le l \le k$. Let $\Sigma$ be the submanifold of dimension $l+1$ such that $p \in A_{l,\Sigma}$. Since $p$ is a regular point of the foliation $\hat{\mathcal{F}}\vert_{\Sigma}$, there exists an open set $U \subset \Sigma$ containing $p$ and an injective holomorphic map $\phi : U \to \mathbb{C}^{l+1}$ with the property that $\phi(p) = 0$ and the foliation $\phi_{*}(\hat{\mathcal{F}}\vert_{\Sigma})$ is given by the vector field $X \equiv \frac{\partial}{\partial z_{1}}$.

    For $A = (a_{2}, a_{3}, \ldots, a_{l+1}) \in \mathbb{C}^{l} \sm \{0\}$, define $\Sigma_{A} := \{z \in \mathbb{C}^{l+1} : \sum_{j=2}^{l+1}{a_{j} z_{j}} = 0\}$. It is clear that $\Sigma_{A}$ is an $l$-dimensional submanifold of $\phi(U)$, and is invariant under $\phi_{*}(\hat{\mathcal{F}}\vert_{\Sigma})$. Since $\phi(E \cap \Sigma)$ is of dimension $l$ and
    \[
    \phi(U) \subset \cup_{A \in \mathbb{C}^{l} \sm \{0\}} {\Sigma_{A}},
    \]
    there exists some $A \in \mathbb{C}^{l} \sm \{0\}$ such that $\Sigma_{A} \not\subset \phi(E \cap \Sigma)$. Now using the fact that $0 \in \Sigma_{A} \cap \phi(E \cap \Sigma)$, we get that $\Sigma_{A} \cap \phi(E \cap \Sigma)$ is of dimension $l-1$. 

    \noindent Define $\tilde{\Sigma} := \phi^{-1}(\Sigma_{A})$. Note that $\tilde{\Sigma}$ is a submanifold of dimension $l$, $E \cap \tilde{\Sigma}$ is an $(l-1)-$dimensional analytic subset, and $p$ is regular point of the foliation $\hat{\mathcal{F}}\vert_{\tilde{\Sigma}}$. Therefore $p \in A_{l-1, \tilde{\Sigma}} \subset A_{l-1}$. Thus $A_{l} \subset A_{l-1}$.
    \medskip

    \item Let $p \in B_{l}$ for some $2 \le l \le k$. Let $\Sigma$ be the submanifold of dimension $l+1$ such that $p \in B_{l,\Sigma}$ and the leaf $\hat{L}_{p}$ of the foliation $\hat{\mathcal{F}}\vert_{\Sigma}$ satisfies $\hat{L}_{p} \subset E \cap \Sigma$. Similar to the previous step, we consider an open set $U \subset \Sigma$ containing $p$ and an injective holomorphic map $\phi : U \to \mathbb{C}^{l+1}$ such that $\phi(p) = 0$, and the foliation $\phi_{*}(\hat{\mathcal{F}}\vert_{\Sigma})$ is induced by the vector field $X \equiv \frac{\partial}{\partial z_{1}}$.

    In these coordinates, the leaf $L_{0}$ of the foliation $\phi_{*}(\hat{\mathcal{F}}\vert_{\Sigma})$ passing through $0 \in \mathbb{C}^{l+1}$ is given by 
    \[
    L_{0} = \{z \in \mathbb{C}^{l+1} : z_{2} = z_{3} = \ldots = z_{l+1} = 0\}.
    \]
    Using the fact that $p \in B_{l, \Sigma}$, we get $L_{0} \subset \phi(E \cap \Sigma)$. Again, there exist some $A \in \mathbb{C}^{l} \sm \{0\}$ such that $\Sigma_{A}$ is an $l-$dimensional $\phi_{*}(\hat{\mathcal{F}}\vert_{\Sigma})-$invariant submanifold of $\phi(U)$ with the property that $\Sigma_{A} \cap \phi(E \cap \Sigma)$ is of dimension $l-1$. Also since $0 \in \Sigma_{A}$, the corresponding leaf $L_{0} \subset \Sigma_{A} \cap \phi(E \cap \Sigma)$. 

    \noindent Define $\tilde{\Sigma} := \phi^{-1}(\Sigma_{A})$. Note that $\tilde{\Sigma}$ is a submanifold of dimension $l$, $E \cap \tilde{\Sigma}$ is an $(l-1)-$dimensional analytic subset, and $p$ is regular point of the foliation $\hat{\mathcal{F}}\vert_{\tilde{\Sigma}}$ such that the corresponding leaf $L_{p} \subset \tilde{\Sigma} \cap E$. Therefore $p \in B_{l-1, \tilde{\Sigma}} \subset B_{l-1}$. Thus $B_{l} \subset B_{l-1}$.
    \medskip

    \item Let $p \in A_{l} \sm B_{l}$ for some $1 \le l \le k$. By definition, $p \in A_{l,\Sigma}$ for some $(l+1)-$dimensional $\mathcal{F}-$invariant submanifold $\Sigma$, such that $\Sigma \cap E$ is of dimension $l$ and $p$ is a removable singularity of the foliation $\hat{\mathcal{F}}\vert_{\Sigma}$. Let $\hat{L}_{p}$ be the leaf of $\hat{\mathcal{F}}\vert_{\Sigma}$ passing through $p$. Since $p \notin B_{l}$, therefore $\hat{L}_{p} \not\subset E \cap \Sigma$. Consider a neighborhood $U \subset \Sigma$ of $p$ small enough such that the component of $L_{p} \cap U$ which contains $p$ is an analytic curve. This is possible since $p$ is a regular point of the foliation $\hat{\mathcal{F}}\vert_{\Sigma}$. We denote the component by $\mathcal{C}$. 

    Next we claim that we can choose U to be small enough so that 
    \[
    \mathcal{C} \cap (E \cap \Sigma) = \{p\}.
    \]
    Suppose not, then there exist a sequence of points $p_{n} \in \mathcal{C} \sm \{p\}$ such that $p_{n} \to p$ and $p_{n} \in \mathcal{C} \cap (E \cap \Sigma)$ for all $n \in \mathbb{N}$. By the local structure of holomorphic foliation, we can choose $U$ small enough such that $\mathcal{C}$ is biholomorphic to $\mathbb{D}$. Let $\alpha : \mathbb{D} \to \mathcal{C} \subset L_{p}$ be the biholomorphic map such that $\alpha(0) = p$. The corresponding sequence $q_{n} := \alpha^{-1}(p_{n})$ satisfies $q_{n} \to 0$. Also let $f_{1}, f_{2}, \ldots, f_{r} \in \mathcal{O}(U)$ defines the analytic set $E\cap \Sigma$ near $p$, that is $E \cap U = \{z \in U : f_{1}(z) = f_{2}(z) = \ldots = f_{r}(z) = 0\}$. For $1 \le j \le r$, consider the map $f_{j} \circ \alpha : \mathbb{D} \to \mathbb{C}$. Since $p_{n} = \alpha(q_{n}) \in E \cap U$, for all $n \in \mathbb{N}$, therefore $f_{j} \circ \alpha (q_{n}) = 0$. By Identity theorem, $f_{j} \circ \alpha \equiv 0$, for all $1 \le j \le r$. This tells us that $f_{j}(z) = 0$ for all $z \in \mathcal{C}$ and $1 \le j \le r$. Therefore $\alpha(\mathbb{D}) = \mathcal{C} \subset E \cap U \subset E \cap \Sigma$, which is a contradiction.

    \noindent Thus, we can choose $U$ to be small enough so that $\mathcal{C}\sm \{p\}$ is contained in a leaf of the foliation $\mathcal {F}$, and so $p \in A_{0}$.

\end{enumerate} 


\section{Proof of Theorem \ref{T: Regularity of metric}}
Let $\{\alpha_{n}\}_{n \ge 1} \subset \mathcal{U}$ be a sequence of uniformizations of leaves of $\mathcal{F}$ such that $\alpha_{n} \to \alpha$ on compact subsets of $\mathbb{D}$, where $\alpha : \mathbb{D} \to M$ is holomorphic. Suppose $p = \alpha(0) \in E$. The following result (Theorem 1.7 in \cite{GV2}) tells us that it suffices to prove $\alpha(\mathbb{D}) \subset E$, to achieve the continuity of $\eta$ on $M \sm E$. 

\begin{res}\label{R:Result1}
    Let $\mathcal{F}$ be a singular hyperbolic foliation on a complex manifold $M$, with singular set $E \subset M$. Also, let $g$ be a given hermitian metric on $M$, and $\eta$ be the modulus of uniformization map of $\mathcal{F}$. Suppose that $\mathcal{U}$ is NCP. Then the following are equivalent:

\begin{enumerate}
\item $\eta$ is continuous in $M \setminus E$.
\item For any sequence $\{\alpha_n\}_{n \ge 1}$ in $\mathcal{U}$, which converges on compact subsets of $\mbb{D}$ to some $\alpha : \mbb{D} \rightarrow M$, and $p = \alpha(0) \notin E$, then $\alpha(\mbb{D}) \subset L_{p}$, where $L_p$ is the leaf of $\mathcal{F}$ passing through $p$. 
\item For any sequence $\{\alpha_n\}_{n \ge 1}$ in $\mathcal{U}$, which converges on compact subsets of $\mbb{D}$ to some $\alpha : \mbb{D} \rightarrow M$, and $p = \alpha(0) \notin E$, then $\alpha$ is a uniformization of $L_p$.
\item For any sequence $\{\alpha_n\}_{n \ge 1}$ in $\mathcal{U}$, which converges on compact subsets of $\mbb{D}$ to some $\alpha : \mbb{D} \rightarrow M$, and $p = \alpha(0) \in E$, then $\alpha(\mbb{D}) \subset E$.
\end{enumerate}
\end{res}

Next, we use the analyticity of $E$ and holomorphicity of the map $\alpha$ to prove that the condition $\alpha(\mathbb{D}) \subset E$ is equivalent to a much weaker condition, that is, $\alpha^{-1}(E) \subset \mathbb{D}$ has an accumulation point in $\mathbb{D}$. 

\begin{lem}\label{L:Lemma1}
Let $\mathcal{F}$ be a singular holomorphic foliation on a complex manifold $M$, with singular set $E \subset M$. Let $\alpha : \mathbb{D} \to M$ be a holomorphic map. Then $\alpha(\mathbb{D}) \subset E$ if and only if $\alpha^{-1}(E) \subset \mathbb{D} $ has an accumulation point in $\mathbb{D}$.
\end{lem}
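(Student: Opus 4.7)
The forward implication is immediate: if $\alpha(\mathbb{D}) \subset E$ then $\alpha^{-1}(E) = \mathbb{D}$, and every point is an accumulation point. The substantive content is the converse, and the plan is to combine the identity theorem with a standard connectedness argument.

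First, I would set up the local picture. Let $z_{0} \in \mathbb{D}$ be an accumulation point of $S := \alpha^{-1}(E)$. Since $S$ is closed in $\mathbb{D}$ (as $E$ is closed and $\alpha$ is continuous), we have $z_{0} \in S$, so $p := \alpha(z_{0}) \in E$. Because $E$ is a closed analytic subset of $M$, there exist a neighborhood $U$ of $p$ and holomorphic functions $f_{1}, \ldots, f_{r} \in \mathcal{O}(U)$ with
\[
E \cap U = \{q \in U : f_{1}(q) = \cdots = f_{r}(q) = 0\}.
\]
Let $V$ be the connected component of $\alpha^{-1}(U) \subset \mathbb{D}$ containing $z_{0}$. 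Each $f_{j} \circ \alpha : V \to \mathbb{C}$ is holomorphic and vanishes on $V \cap S$, which contains a sequence accumulating at $z_{0}$. The identity theorem therefore forces $f_{j} \circ \alpha \equiv 0$ on $V$ for every $j$, so $\alpha(V) \subset E$.

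Next, I would globalize via connectedness of $\mathbb{D}$. Define
\[
B := \{z \in \mathbb{D} : \text{there is an open neighborhood } W \text{ of } z \text{ with } \alpha(W) \subset E\}.
\]
The previous paragraph gives $z_{0} \in B$, and $B$ is open by definition. For closedness in $\mathbb{D}$, suppose $z_{n} \in B$ with $z_{n} \to z \in \mathbb{D}$; either $z_{n} = z$ eventually, in which case $z \in B$ trivially, or the $z_{n}$ are distinct from $z$ and lie in $S$, making $z$ an accumulation point of $S$. In the latter case, the local argument of the previous paragraph applies at $z$ in place of $z_{0}$, giving $z \in B$. Hence $B$ is a non-empty, open, and closed subset of the connected set $\mathbb{D}$, so $B = \mathbb{D}$ and in particular $\alpha(\mathbb{D}) \subset E$.

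There is no real obstacle here; the only subtle point is the mild bookkeeping of replacing $\alpha^{-1}(U)$ with its connected component containing $z_{0}$ before invoking the identity theorem (since $\alpha^{-1}(U)$ need not be connected), and handling the trivial case $z_{n} = z$ in the closedness step so that one really has an accumulation point at each limit. The foliation structure plays no role in this lemma — only the fact that $E$ is a closed analytic subset of $M$ and that $\alpha$ is holomorphic on the connected domain $\mathbb{D}$.
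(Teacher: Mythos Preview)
Your proof is correct and follows essentially the same approach as the paper: apply the identity theorem locally using analytic defining functions for $E$, then globalize via connectedness of $\mathbb{D}$. The only cosmetic difference is that the paper packages the connectedness step as a maximality argument for a closed connected subset $\hat{D} \subset \alpha^{-1}(E)$, whereas you use the equivalent open--closed subset argument with the set $B$; both routes encode the same idea.
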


\begin{proof}
    If $\alpha(\mathbb{D}) \subset E$, it is clear that $\alpha^{-1}(E) = \mathbb{D} \subset \mathbb{D}$ has every point in $\mathbb{D}$ as an accumulation point. So, the only non-trivial part is the converse. Suppose $\alpha^{-1}(E) \subset \mathbb{D}$ has an accumulation point $z_{0} \in \mathbb{D}$. Since $E\subset M$ is closed and $\alpha$ is holomorphic, therefore $\alpha^{-1}(E)$ is a closed subset of $\mathbb{D}$. Therefore $z_{0} \in \alpha^{-1}(E)$. Let $z_{n} \in \alpha^{-1}(E) \subset \mathbb{D}$ be a sequence of points such that $z_{n} \to z_{0}$. Define $p_{0} := \alpha(z_{0}) \in E$ and $p_{n} := \alpha(z_{n}) \in E$. Since $E$ is an analytic set, there exists a neighborhood $U \subset M$ of $p_{0}$ and holomorphic functions $f_{1}, f_{2}, \ldots, f_{s} \in \mathcal{O}(U)$ for some $s \ge 2$, such that $E \cap U = \{p \in U : f_{1}(p) = f_{2}(p) = \ldots = f_{s}(p) = 0\}$. Choose $r >0$ be small enough that $\alpha(D(z_{0}, r)) \subset U$. Consider the holomorphic functions $f_{1} \circ \alpha\vert_{D(z_{0},r)}, f_{2} \circ \alpha\vert_{D(z_{0},r)}, \ldots, f_{s} \circ \alpha\vert_{D(z_{0},r)} \in \mathcal{O}(D(z_{0},r))$. Since $z_{n} \to z_{0}$, there exist $N_{1} \ge 1$ such that $z_{n} \in D(z_{0},r)$ for all $n \ge N_{1}$. Note that for $1 \le i \le s$, and for all $n \ge N_{1}$ 
    \[
    (f_{i} \circ \alpha\vert_{D(z_{0},r)})(z_{n}) = (f_{i} \circ \alpha\vert_{D(z_{0},r)})(z_{0}) = 0.
    \]
    By Identity theorem, we get $f_{i} \circ \alpha\vert_{D(z_{0},r)} \equiv 0$ for all $1 \le i \le s$. Therefore, $D(z_{0},r) \subset \alpha^{-1}(E \cap U) \subset \alpha^{-1}(E)$. Let $\hat{D} \subset \mathbb{D}$ be the maximal closed connected subset such that $z_{0} \in \hat{D}$ and $\hat{D} \subset \alpha^{-1}(E)$. We already proved that $\hat{D} \neq \{z_{0}\}$, in fact, the interior $\text{int}(\hat{D}) \neq \emptyset$. We claim $\hat{D} = \mathbb{D}$. Suppose not, then there exists an accumulation point $\hat{z} \in \mathbb{D}$ of the set $\hat{D}$ which satisfies $\hat{z} \in \partial{\hat{D}}$. By the closedness of the set $\hat{D}$, $\hat{z} \in \hat{D}$. Using the above argument, we observe that there exists $r>0$ such that $\alpha(D(\hat{z},r)) \subset E$. Consider the set $\tilde{D} = \hat{D} \cup \overline{D(\hat{z},\frac{r}{2})} \subset \mathbb{D}$, and note that $\tilde{D}$ is a closed connected subset of $\mathbb{D}$ such that $z_{0} \in \tilde{D}$ and $\tilde{D} \subset \alpha^{-1}(E)$. By the maximality of $\hat{D}$, we get $\hat{D} = \tilde{D}$. But this contradicts the fact that $\hat{z} \in \partial{\hat{D}}$, since $\hat{z} \in \text{int}(\tilde{D})$. Therefore, $\hat{D} = \mathbb{D}$, and thus $\alpha(\mathbb{D}) \subset E$.
\end{proof}

\noindent The next proposition along with Result \ref{R:Result1} and Lemma \ref{L:Lemma1} will conclude the proof.

\begin{prop}\label{P:Prop1}
   Let $\mathcal{F}$ be a singular hyperbolic foliation on a complex manifold $M$ and singular set $E$. Suppose that $\mathcal{U}$ is NCP, $A_{0} \subset B_{1}$, and for each $p \in A_{0}$, there exist $(N-1 - l_{p})$ number of local $\mathcal{F}$-invariant hypersurfaces $\Sigma_{1,p}, \Sigma_{2,p}, \ldots, \Sigma_{N-1 -l_{p},p}$ such that $\Sigma_{p} = \cap_{i =1}^{N-1-l_{p}} \Sigma_{i,p}$ is a $(l_{p} +1)-$dimensional submanifold and $p \in B_{l_{p}, \Sigma_{p}}$. If $\{\alpha_{n}\}_{n \ge 1} \subset \mathcal{U}$ is a sequence of uniformizations which converges on compact subsets of $\mbb{D}$ to some $\alpha : \mbb{D} \rightarrow M$, and $p = \alpha(0) \in E$, then there exists $r >0$ such that $\alpha(D(0,r)) \subset E$.
\end{prop}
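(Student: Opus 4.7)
I would argue by contradiction, assuming $\alpha$ is non-constant (otherwise $\alpha \equiv p$ trivially satisfies the conclusion) and that $\alpha^{-1}(E) \subset \mathbb{D}$ is discrete; by Lemma \ref{L:Lemma1} this is equivalent to the failure of the desired conclusion. In particular $0$ is isolated in $\alpha^{-1}(E)$, so $\alpha(D(0, r_0) \sm \{0\}) \subset M \sm E$ for some $r_0 > 0$.

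The first step is to show that $\alpha$ ``travels inside a single leaf'' on $\mathbb{D} \sm \alpha^{-1}(E)$. For any $z_0$ with $q := \alpha(z_0) \in M \sm E$, a flow-box chart around $q$ together with connectedness of a small disc around $z_0$ forces each $\alpha_n$ to land there in a single plaque of $L_n$ (namely the plaque through $\alpha_n(z_0)$); the uniform convergence $\alpha_n \to \alpha$ then places $\alpha$ in the plaque of $L_q$ through $q$. Since $\mathbb{D} \sm \alpha^{-1}(E)$ is connected (its complement is discrete in a surface), this local statement propagates to $\alpha(\mathbb{D} \sm \alpha^{-1}(E)) \subset L_q$. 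Taking any $z_0 \in D(0, r_0) \sm \{0\}$ exhibits $\alpha(D(0, r_0))$ as a local analytic curve through $p$ whose puncture lies inside a single leaf, hence $p \in A_0$.

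By the hypothesis $A_0 \subset B_1$ we obtain the local $\mathcal{F}$-invariant submanifold $\Sigma_p = \bigcap_{i=1}^{N-1-l_p} \Sigma_{i,p}$ of dimension $l_p + 1$ with $p \in B_{l_p, \Sigma_p}$. Write $\Sigma_{i,p} = \{f_{i,p} = 0\}$ on a neighborhood $U$ of $p$ and shrink $r > 0$ so that $\alpha(\overline{D(0, r)}) \subset U$ and, for $n$ large, $\alpha_n(\overline{D(0, r)}) \subset U$. Connectedness of $D(0, r)$ places $\alpha_n(D(0, r))$ inside a single component of $L_n \cap U$, and local $\mathcal{F}$-invariance of $\Sigma_{i,p}$ (together with $L_n \cap E = \emptyset$) forces this component to be either entirely inside or entirely disjoint from $\Sigma_{i,p}$. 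Hence $f_{i,p} \circ \alpha_n$ is either identically zero or nowhere zero on $D(0, r)$. Passing to the uniform limit and using Hurwitz's theorem together with $f_{i,p}(\alpha(0)) = f_{i,p}(p) = 0$ yields $f_{i,p} \circ \alpha \equiv 0$ on $D(0, r)$ in both alternatives; intersecting over all $i$ gives $\alpha(D(0, r)) \subset \Sigma_p$.

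For the final step I work inside $\Sigma_p$. Since $p \in B_{l_p, \Sigma_p}$, the saturated foliation $\hat{\mathcal{F}}|_{\Sigma_p}$ is non-singular at $p$ and admits flow-box coordinates $(u, v) \in \mathbb{D} \times \mathbb{D}^{l_p}$ with $p = (0,0)$, leaves $\{v = \text{const}\}$, and $\hat{L}_p = \{v = 0\} \subset E$ by condition (3) of Definition \ref{D:B_l}. Shrinking $r$ so that $\alpha(D(0, r))$ lies in the chart and choosing $z_0 \in D(0, r) \sm \{0\}$ with $\alpha(z_0) \notin E$, the first step puts $\alpha$ near $z_0$ inside the plaque of $L_{\alpha(z_0)}$ through $\alpha(z_0)$, which by local $\mathcal{F}$-invariance of $\Sigma_p$ coincides with the slice $\{v = v(\alpha(z_0))\}$. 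Hence $v \circ \alpha$ is constant near $z_0$, so constant on $D(0, r)$ by the identity theorem, and evaluating at $0$ identifies the constant with $v(p) = 0$; thus $\alpha(D(0, r)) \subset \{v = 0\} \subset E$, contradicting the discreteness of $\alpha^{-1}(E)$. The principal technical hurdle is the Hurwitz dichotomy in paragraph three: the maps $\alpha_n$ need not themselves land in $\Sigma_p$, so one must carefully track, simultaneously for each of the $N-1-l_p$ invariant hypersurfaces, which component of $L_n \cap U$ captures $\alpha_n(D(0, r))$ and verify that the ``identically zero vs.\ nowhere zero'' alternative survives the uniform limit.
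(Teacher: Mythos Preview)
Your proof is correct and follows essentially the same route as the paper's: the Hurwitz dichotomy on each invariant hypersurface $\Sigma_{i,p}$ to force $\alpha(D(0,r)) \subset \Sigma_p$, followed by the observation that the saturated leaf $\hat L_p \subset E$ traps $\alpha$ near $0$. The only organizational differences are that the paper splits into the two cases $p \notin A_0$ and $p \in A_0$ (you fold the first into your contradiction by showing $p \in A_0$ directly), and the paper cites Lemma~1.6 of \cite{GV2} for the ``limit lies in a single leaf'' step that you sketch by hand in your first paragraph.
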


\begin{proof}
   Since $p = \alpha(0) \in E$, there are only two possibilities, either $p \notin A_{0}$ or $p \in A_{0}$. 
   
   \noindent (a) Suppose $p \notin A_{0}$. If $0 \in \alpha^{-1}(p) \subset \mathbb{D}$ is an isolated point of the set $\alpha^{-1}(E)$, then there exists $r_{1} > 0$ such that $\alpha(D(0,r_{1})\sm \{0\}) \subset M \sm E$. By Lemma 1.6 in \cite{GV2} (page 96), we get $\alpha(D(0,r_{1})\sm \{0\}) \subset L$, where $L$ is a leaf of the foliation $\mathcal{F}$. Thus, $\mathcal{C} = \alpha(D(0,r_{1}))$ is a local separatrix through $p \in E$, and $\mathcal{C} \sm \{p\} \subset M \sm E$, contradicting the fact that $p \notin A_{0}$. Therefore, $0 \in \mathbb{D}$ is an accumulation point of the set $\alpha^{-1}(E)$. By the arguments used in the proof of the Lemma \ref{L:Lemma1}, there exists $r>0$ such that $\alpha(D(0,r)) \subset E$.

   \noindent (b) Now suppose $p \in A_{0}$. By hypothesis, $p \in B_{l_{p}, \Sigma_{p}}$, where $p \in \Sigma_{p} = \cap_{i =1}^{N-1-l_{p}} \Sigma_{i,p}$ and $\Sigma_{i, p}$'s are analytic local $\mathcal{F}-$invariant hypersurfaces, for $1 \le i \le N-1-l_{p}$. Choose a small enough neighborhood $U$ of $p$ such that for $1 \le i \le N-1-l_{p}$,
   \[
   \Sigma_{i,p} = \{f_{i} = 0\},
   \]
   where $f_{i} \in \mathcal{O}(U)$. Since $p \in U$ and $\alpha_{n} \to \alpha$ on the compacts subsets of $\mathbb{D}$, there exists $r_{2} >0$ and $N_{1} \ge 1$ such that $\alpha_{n}(D(0,r_{2})), \alpha(D(0,r_{2})) \subset U$ for all $n \ge N_{1}$. We claim that $\alpha(D(0,r_{2})) \subset \Sigma_{i,p}$ for all $1 \le i \le N-1-l_{p}$. Fix $1 \le i \le N-1-l_{p}$. Consider for $n \ge N_{1}$, the set $\alpha_{n}(D(0,r_{2})) \subset U$. Since $\Sigma_{i,p}$ is an $\mathcal{F}-$invariant hypersurface and $\alpha_{n} \in \mathcal{U}$, therefore either $\alpha_{n}(D(0,r_{2})) \subset \Sigma_{i,p}$ or $\alpha_{n}(D(0,r_{2})) \cap \Sigma_{i,p} = \emptyset$. 
   
   \noindent If there is a subsequence $\{\alpha_{n_{k}}\}_{k \ge 1}$ of $\{\alpha_{n}\}_{n \ge N_{1}}$ such that for all $k \ge 1$, 
   \[
   \alpha_{n_{k}}(D(0,r_{2})) \subset \Sigma_{i,p},
   \]
   that is, $f_{i} \circ \alpha_{n_{k}}\vert_{D(0,r_{2})} \equiv 0$, then $f_{i} \circ \alpha_{n_{k}}\vert_{D(0,r_{2})} \to f_{i} \circ \alpha\vert_{D(0,r_{2})} \equiv 0$. Therefore, we get $\alpha(D(0,r_{2})) \subset \Sigma_{i,p}$. 
   
   \noindent If there does not exists such subsequence, then there exists $N_{2} \ge 1$ such that for all $n \ge N_{2}$,
   \[
   \alpha_{n}(D(0,r_{2})) \cap \Sigma_{i,p} = \emptyset,
   \]
   that is $f_{i} \circ \alpha_{n}\vert_{D(0,r_{2})} (z) \neq 0$ for all $z \in D(0,r_{2})$. But $f_{i} \circ \alpha_{n}\vert_{D(0,r_{2})} \to f_{i} \circ \alpha\vert_{D(0,r_{2})}$ uniformly on $D(0,r_{2})$, and $f_{i} \circ \alpha\vert_{D(0,r_{2})}(0) = f_{i}(p) = 0$. Therefore, Hurwitz's theorem tells us that 
   \[
   f_{i} \circ \alpha\vert_{D(0,r_{2})} \equiv 0,
   \]
   that is $\alpha(D(0,r_{2})) \subset \Sigma_{i,p}$. So we have our claim that $\alpha(D(0,r_{2})) \subset \Sigma_{i,p}$, for all $1 \le i \le N-1-l_{p}$. Therefore, $\alpha(D(0,r_{2})) \subset \Sigma_{p} = \cap_{i=1}^{N-1-l_{p}}{\Sigma_{i,p}}$.

   \medskip

   \noindent Recall that $\Sigma_{p}$ is a $(l_{p}+1)-$dimensional $\mathcal{F}-$invariant submanifold, and $p \in B_{l_{p}, \Sigma_{p}}$. We now consider the saturated foliation $\hat{\mathcal{F}}\vert_{\Sigma_{p}}$ of $\Sigma_{p}$. By definition of the set $B_{l_{p}, \Sigma_{p}}$, the leaf $\hat{L}_{p} \subset E \cap \Sigma_{p} \subset E$. Suppose if possible $0 \in \alpha^{-1}(E) \subset \mathbb{D}$ is an isolated point of the set $\alpha^{-1}(E)$. Then there exists $r_{3} > 0$ such that $\alpha(D(0,r_{3}) \sm \{0\}) \subset \Sigma_{p} \sm E$. Again by Lemma 1.6 (in \cite{GV2}), 
   \[
   \alpha(D(0,r_{3}) \sm \{0\}) \subset L,
   \]
   where $L$ is a leaf of $\mathcal{F}$. Therefore, $\alpha(D(0,r_{3}))$ is an invariant curve of the foliation $\mathcal{F}\vert_{\Sigma_{p}}$, which in turn tells us that it is also invariant under the saturated foliation $\hat{\mathcal{F}}\vert_{\Sigma_{p}}$. Thus, $\alpha(D(0,r_{3})) \subset \hat{L}_{p}$. Since $\hat{L}_{p} \subset E$, we have 
   \[
   \alpha(D(0,r_{3})) \subset E.
   \]
    This is a contradiction to the assumption that $0 \in \alpha^{-1}(E)$ is an isolated point. Thus, $0 \in \alpha^{-1}(E)$ is an accumulation point and therefore there exists $r >0$ such that $\alpha(D(0,r)) \subset E$.
\end{proof}

\medskip

\section{Proof of Theorem \ref{T:Mix Regularity}}
Following the proof of Theorem \ref{T: Regularity of metric}, it suffices to prove that if $\{\alpha_{n}\}_{n \ge 1} \subset \mathcal{U}$ is a sequence of uniformizations of $\mathcal{F}$ such that $\alpha_{n} \to \alpha$ on compact subsets of $\mathbb{D}$, where $\alpha : \mathbb{D} \to M$ is holomorphic and $p = \alpha(0) \in E$, then $\alpha(\mathbb{D}) \subset E$.

\noindent If $p \notin A_{0}$ or $p \in E_{L}$, then we already observed that $\alpha(\mathbb{D}) \subset E$. So, the only case remaining is when $p \in E_{M}$. By definition of the set $E_{M}$, there exists a neighborhood $U$ of $p$, and $(N-k)$ number of local $\mathcal{F}-$invariant hypersurfaces $\tilde{\Sigma}_{1,p}, \tilde{\Sigma}_{2,p}, \ldots, \tilde{\Sigma}_{N-k,p} \subset U$ such that 
    \[
    p \in \cap_{i=1}^{N-k}{\tilde{\Sigma}_{i,p}} \subset E,
    \]
    where $k = \text{dim}_{p}E$. As we noted in the proof of Proposition \ref{P:Prop1}, there exists $r >0$ such that $\alpha(D(0,r)) \subset \tilde{\Sigma}_{i,p}$, for all $1 \le i \le N-k$. Therefore, 
    \[
    \alpha(D(0,r)) \subset \cap_{i=1}^{N-k}{\tilde{\Sigma}_{i,p}} \subset E \cap U \subset E.
    \]
    Using Lemma \ref{L:Lemma1}, we get $\alpha(\mathbb{D}) \subset E$, and we have our result.

\medskip


\section{Proof of Theorem \ref{T:Cont-Extn}}
Consider the map $\tilde{\eta} : M \to [0,\infty)$ given by
\[
\tilde{\eta}(p) := 
\begin{cases}
       \eta(p), &\quad\text{if} \quad p \in M \sm E\\
       0, &\quad\text{if} \quad p \in E.\\ 
     \end{cases}
\]
We will prove that $\tilde{\eta}$ is continuous on $M$. Since $\eta$ is continuous on $M \sm E$, it suffices to check the continuity of $\tilde{\eta}$ on the set $E$. 

\noindent Let $p \in E$ be an arbitrary singular point of the foliation $\mathcal{F}$. If $\mathcal{F}$ is transversal type at $p$, then Theorem 1.8 in \cite{GV2} (see page 96) tells us that $\tilde{\eta}$ is continuous at $p$. So, we assume that $\mathcal{F}$ is not transversal type at $p \in E$. Suppose, if possible, that $\tilde{\eta}$ is not continuous at $p$. Then there exist $\epsilon >0$, and a sequence $\{p_{n}\}_{n \ge 1} \subset M \sm E$ of non-singular points of $\mathcal{F}$ such that $p_{n} \to p$, and 
\begin{equation}\label{E:Equation5.1}
\tilde{\eta}(p_{n}) = \eta(p_{n}) > \epsilon,
\end{equation}
for all $n \ge 1$. For $n \ge 1$, let $\alpha_{n} : \mathbb{D} \to L_{p_{n}}$ be a uniformization of the leaf $L_{p_{n}}$ such that $\alpha_{n}(0) = p_{n}$, that is $\alpha_{n} \in \mathcal{U}$. Since $\alpha_{n}(0) = p_{n} \to p$, and $\mathcal{U}$ is NCP, therefore upto a subsequence we can assume that $\alpha_{n} \to \alpha$ uniformly on the compact subsets of $\mathbb{D}$, where $\alpha: \mathbb{D} \to M$ is some holomorphic map satisfying $\alpha(0) = p \in E$. Using Result \ref{R:Result1}, we get $\alpha(\mathbb{D}) \subset E$. We claim that $\alpha$ is a constant map, that is, $\alpha \equiv p$. Assuming the above claim, we get $\alpha'(0) = 0$. Since $\alpha_{n} \to \alpha$ uniformly on compact subsets of $\mathbb{D}$, therefore $\alpha_{n}'(z) \to \alpha'(z)$ for all $z \in \mathbb{D}$. In particular,
\[
\tilde{\eta}(p_{n}) = \eta(p_{n}) = \vert \alpha_{n}'(0)\vert_{g} \to \vert \alpha'(0)\vert_{g} = 0.
\]
So, there exists $N_{1} \ge 1$ such that $\tilde{\eta}(p_{n}) < \epsilon$, for all $n \ge N_{1}$. But this contradicts \ref{E:Equation5.1}, and thus $\tilde{\eta}$ is continuous at $p \in E$. 

\noindent So, it suffices to prove our claim that $\alpha \equiv p$. Suppose not, then $\alpha^{-1}(\{p\}) \subset \mathbb{D}$ cannot have accumulation points in $\mathbb{D}$, and $\alpha' \not\equiv 0$. Therefore, there exists $r >0$ such that for all $z \in D(0,r) \sm \{0\}$, we have $\alpha(z) \in E \sm \{p\}$, and $\alpha'(z) \neq 0$. Since, the set $E \sm \{q \in E : \mathcal{F} \ \text{is transversal type at} \ q\}$ is a discrete subset of $M$, we can choose $r>0$ small enough such that $\mathcal{F}$ is transversal type at each $q \in \alpha(D(0,r)\sm \{0\})$. Now, observe that for all $z \in \mathbb{D}$, 
\begin{equation}\label{E:Equation5.2}
\alpha'(z) \in C_{\alpha(z)}{E}.
\end{equation}
This is due to the fact that $\alpha(\mathbb{D}) \subset E$. Also, since $\alpha_{n}'(z) \in T_{\alpha_{n}(z)}{\mathcal{F}}$ for all $z \in \mathbb{D}$, and $\alpha_{n}'(z) \to \alpha'(z)$, therefore 
\begin{equation}\label{E:Equation5.3}
\alpha'(z) \in C_{\alpha(z)}{\mathcal{F}},
\end{equation}
for all $z \in \mathbb{D}$. Combining equations \ref{E:Equation5.2} and \ref{E:Equation5.3}, we get $\alpha'(z) \in C_{\alpha(z)}{\mathcal{F}} \cap C_{\alpha(z)}{E}$, for all $z \in \mathbb{D}$. Note that for all $z \in D(0,r) \sm \{0\}$, 
\[
0 \neq \alpha'(z) \in C_{\alpha(z)}{\mathcal{F}} \cap C_{\alpha(z)}{E}.
\]
Therefore, $\mathcal{F}$ cannot be transversal type at $\alpha(z) \in E$, for all $z \in D(0,r) \sm \{0\}$. This is a contradiction. Therefore $\alpha \equiv p$, and we have our claim.

\medskip


\section{Remarks}

\noindent \textbf{Remark 1:} One can look at \cite{Car-San} and \cite{Salas}, where sufficient conditions have been provided for the existence of local invariant submanifolds passing through singular points of a foliation. Combined with a suitable condition on the dimension of the singular set, one can establish $A_{1} \neq \emptyset$ for such foliations. Furthermore, Salas \cite{Salas} (see Theorem 2.6, page 736) demonstrated that under certain conditions on the singular set, almost every point $p \in E_{N-2}$ belongs to $A_{0} \cap A_{N-2}$, implying that $E_{N-2} \cap A_{0} \cap A_{N-2}$ has full measure in $E_{N-2}$.  Investigating these conditions further could also provide insight into the structure of the sets $B_{l}$.

\medskip

\begin{exam}\label{E:Example6.1}
    Let $M = \mathbb{D}^3 \subset \mathbb{C}^3$ and $\mathcal{F}$ be the singular hyperbolic foliation on $M$ induced by the holomorphic vector field
    \[
    X(x,y,z) = (y^{2} -x^3)\frac{\partial}{\partial x} + (y^2 - x^3)\frac{\partial}{\partial y} + x \frac{\partial}{\partial z}.
    \]
    It is easy to see that $\text{sing}(\mathcal{F}) = E = \{(0,0,z) : z \in \mathbb{D}\}$. Consider the hypersurface $\Sigma := \{(x,y,z) \in M : y^{2} = x^{3}\}$. Note that for $p = (x,y,z) \in \Sigma$, 
    \[
    X(p) = 0\frac{\partial}{\partial x} + 0\frac{\partial}{\partial y} + x \frac{\partial}{\partial z} = x \frac{\partial}{\partial z} \in T_{p}{\Sigma}.
    \]
    Therefore $\cup_{p \in \Sigma}{T_{p}{\mathcal{F}}} = T\mathcal{F}\vert_{\Sigma} \subset \cup_{q \in \Sigma}{T_{q}{\Sigma}} $, and thus $\Sigma$ is an $\mathcal{F}-$invariant hypersurface. One can easily check that if $p_{0} = (x_{0},y_{0},z_{0}) \in \Sigma \sm E$, that is $x_{0} \neq 0$, then the corresponding leaf $L_{p_{0}}$ is given by
    \[
    L_{p_{0}} = \{(x_{0}, y_{0}, z) : z \in \mathbb{D}\}.
    \]
    Let $\{\alpha_{n}\}_{n \ge 1} \subset \mathcal{U}$ be a sequence of uniformizations of leaves of $\mathcal{F}$ such that $\alpha_{n} \to \alpha$ uniformly on compact subsets of $\mathbb{D}$, where $\alpha : \mathbb{D} \to M$ is holomorphic and $\alpha(0) = \hat{p} = (0,0,\hat{z}) \in E$. Since $\hat{p} \in E \subset \Sigma$, we can use the argument in the proof of Proposition \ref{P:Prop1} to prove that $\alpha(D(0,r)) \subset \Sigma$ for some $r>0$. 

    \noindent If the set $\alpha^{-1}(E) \subset \mathbb{D}$ has an accumulation point in $\mathbb{D}$, then we can choose $r>0$ small enough so that $\alpha(D(0,r)) \subset E$. This will further imply that $\alpha(\mathbb{D}) \subset E$. Using Result \ref{R:Result1}, we get the continuity of the map $\eta$ on $M \sm E$.

    \noindent Suppose if possible, that the set $\alpha^{-1}(E) \subset \mathbb{D}$ has no accumulation point in $\mathbb{D}$. Therefore, we can choose $r>0$ small enough so that $\alpha(D(0,r)\sm\{0\}) \subset M \sm E$. Lemma 1.6 (in \cite{GV2}) tells us that $\alpha(D(0,r)\sm \{0\}) \subset L$, for some leaf $L$ of the foliation $\mathcal{F}$. Let $\xi \in D(0,r) \sm \{0\}$, and consider $\alpha(\xi) = (x_{1}, y_{1}, z_{1}) \in \Sigma \sm E$. Clearly $x_{1} \neq 0$, and $L_{\alpha(\xi)} = \{(x_{1}, y_{1}, z) : z \in \mathbb{D}\}$. Since $\alpha(D(0,r)\sm \{0\}) \subset L$, therefore $L = L_{\alpha(\xi)}$. Thus we get
    \[
    \alpha(D(0,r) \sm \{0\}) \subset L_{\alpha(\xi)} = \{(x_{1}, y_{1}, z) : z \in \mathbb{D}\},
    \]
    where $x_{1} \neq 0$. This is a contradiction to the fact that $\alpha\vert_{D(0,r)} : D(0,r) \to M$ is holomorphic and $\alpha(0) = (0,0,\hat{z}) \in E$. Therefore, the set $\alpha^{-1}(E) \subset \mathbb{D}$ must have an accumulation point in $\mathbb{D}$. Thus, the modulus of uniformization map $\eta$ is continuous on $M \sm E$.
\end{exam}

\noindent \textbf{Remark 2:} 
We observe a fundamental distinction in the definitions of the sets $A_{0}$, and $A_{l}$ for $l\ge1$, as given in the Definitions \ref{D:A0} and \ref{D:Al}, respectively. In the case of $A_{0}$, the condition requires the existence of an invariant analytic curve, which may itself exhibit singularities as an analytic variety at the singular points of the foliation. However, for $A_{l}$ with $l\ge 1$, we impose the stricter requirement of an invariant complex submanifold. This distinction is necessary to properly define the subfoliation and the corresponding removable singular points. Nevertheless, Example \ref{E:Example6.1} suggests the possibility of extending these notions to more general objects, such as hypersurfaces. Such a generalization could potentially allow us to establish the conclusions of Theorem \ref{T: Regularity of metric}, Corollary \ref{C: Regularity of metric}, and Theorem \ref{T:Mix Regularity} for a broader class of foliations.

\medskip

\noindent \textbf{Remark 3:} In Theorem \ref{T:Cont-Extn}, we assume that the set $E \sm \{p \in E: \mathcal{F} \ \text{is transversal type at} \ p\}$ is discrete in order to establish the existence of a continuous extension of the map $\eta$ to the entirety of $M$. If we replace this assumption with the more general condition that this set forms a totally real subset of $M$, a similar approach can be employed to show that the continuous extension of $\eta$ to $M$ remains valid.





\begin{thebibliography}{DNS1}




\bibitem{CS} Camacho, C.; Sad, P.:
\emph{Invariant varieties through singularities of holomorphic vector fields}. Ann. of Math. (2) {\bf 115} (1982), no. 3, 579--595.

\bibitem{Ca} Candel, Alberto:
\emph{Uniformization of surface laminations}, Annales scientifiques de l'\'{E}cole Normale Sup\'{e}rieure {\bf 26} no. 4 (1993), 489--516.

\bibitem{Car-San} Carrillo, S.A.; Sanz, F.:
\emph{Briot-Bouquet's theorem in high dimension}. Publ. Mat. {\bf 58} (2014), no. 2, 135--152.

\bibitem{Ch} Chirka, E.M.:
\emph{Complex analytic sets}. Mathematics and its Applications (Soviet Series), Kluwer Academic Publishers Group, Dordrecht, {\bf 46}, 1989 (Translated from the Russian by R. A. M. Hoksbergen).


\bibitem{DNS1} Dinh, Tien-Cuong; Nguyên, Viet-Anh; Sibony, Nessim:
\emph{Entropy for hyperbolic Riemann surface laminations I}. Frontiers in complex dynamics, 569--592, Princeton Math. Ser., 51, Princeton Univ. Press, Princeton, NJ, 2014.

\bibitem{DNS2} Dinh, Tien-Cuong; Nguyên, Viet-Anh; Sibony, Nessim:
\emph{Entropy for hyperbolic Riemann surface laminations II}. Frontiers in complex dynamics, 593--621, Princeton Math. Ser., 51, Princeton Univ. Press, Princeton, NJ, 2014.

\bibitem{FB} Bacher, François:
\emph{Poincaré metric of holomorphic foliations with non-degenerate singularities}. Internat. J. Math. {\bf 34} (2023), no. 10, 22 pp.

\bibitem{FS} Fornæss, John Erik; Sibony, Nessim:
\emph{Riemann surface laminations with singularities}. J. Geom. Anal. {\bf 18} (2008), no. 2, 400--442.

\bibitem{G1} Gehlawat, Sahil:
\emph{A few remarks on the Poincaré metric on a singular holomorphic foliation}. J. Math. Anal. Appl. {\bf 536} (2024), no. 2, 13 pp.


\bibitem{GV2} Gehlawat, Sahil; Verma, Kaushal:
\emph{Regularity of the leafwise Poincar\'e metric on singular holomorphic foliations}. J. Geom. Anal. {\bf 34} (2024), no. 4, 18 pp.

\bibitem{GL} Gómez-Mont, X.; Luengo, I.:
\emph{Germs of holomorphic vector fields in $\mathbb{C}^3$ without a separatrix}. Invent. Math. {\bf 109} (1992), no. 2, 211--219.

\bibitem{N1} Lins Neto, Alcides:
\emph{Uniformization and the Poincar\'{e} metric on the leaves of a foliation by curves}. Bol. Soc. Brasil. Mat. (N.S.) {\bf 31} (2000), no. 3, 351--366.

\bibitem{N2} Lins Neto, Alcides:
\emph{Simultaneous uniformization for the leaves of projective foliations by curves}. Bol. Soc. Brasil. Mat. (N.S.) {\bf 25} (1994), no. 2, 181--206.

\bibitem{NM} Lins Neto, A.; Canille Martins, J. C.:
\emph{Hermitian metrics inducing the Poincar\'{e} metric in the leaves of a singular holomorphic foliation by curves}. Trans. Amer. Math. Soc. {\bf 356} (2004), no. 7, 2963--2988.

\bibitem{Ng1} Nguyên, Viêt-Anh:
\emph{Singular holomorphic foliations by curves I: integrability of holonomy cocycle in dimension 2}. Invent. Math. {\bf 212} (2018), no. 2, 531--618.

\bibitem{Ng2} Nguyên, Viêt-Anh:
\emph{Ergodic theory for Riemann surface laminations: a survey}. Geometric complex analysis, 291--327, Springer Proc. Math. Stat., {\bf 246} (2018), Springer, Singapore.

\bibitem{RR1} Rebelo, Julio C.; Reis, Helena:
\emph{$2-$dimensional Lie algebras and separatrices for vector fields on $(\mathbb{C}^3,0)$}. J. Math. Pures Appl. (9) {\bf 105} (2016), no. 2, 248--264.

\bibitem{RR2} Rebelo, Julio C.; Reis, Helena:
\emph{On the resolution of singularities of one-dimensional foliations on three-manifolds}. Russ. Math. Surv. {\bf 76} (2021), no. 2, 291--355.

\bibitem{Salas} Sancho de Salas, Fernando:
\emph{Codimension two singularities of a vector field}. Math. Ann. {\bf 321} (2001), no. 3, 729--738.

\bibitem{V} Verjovsky, Alberto:
\emph{A uniformization theorem for holomorphic foliations}. The Lefschetz centennial conference, Part III (Mexico City, 1984), 233–253, Contemp. Math., 58, III, Amer. Math. Soc., Providence, RI, 1987.

\end{thebibliography}
\end{document}